\newtheorem{theorem}{Theorem}[section]
\newtheorem{corollary}[theorem]{Corollary}
\theoremstyle{definition}
\newtheorem{definition}[theorem]{Definition}
\newtheorem{conjecture}[theorem]{Conjecture}
\theoremstyle{remark}
\newtheorem{remark}[theorem]{Remark}
\numberwithin{equation}{section}
\begin{document}
\setcounter{page}{1}

\color{darkgray}{
\noindent \centering
{\small   }\hfill    {\small }\\
{\small }\hfill  {\small }}

\centerline{}

\centerline{}


\title[Local smoothing estimates for bilinear FIOs]{Local smoothing estimates for bilinear Fourier integral operators}

\author[D. Cardona]{Duv\'an Cardona$^{1}$}
\address{
  Duv\'an Cardona:
  \endgraf
  Department of Mathematics: Analysis, Logic and Discrete Mathematics
  \endgraf
  Ghent University,
  \endgraf
  Ghent-Belgium.
  \endgraf
  {\it E-mail address:} {\rm duvanc306@gmail.com, duvan.cardonasanchez@ugent.be.}
  \endgraf
 Department of Mathematics
  \endgraf
  Pontificia Universidad Javeriana, 
  \endgraf
  Bogot\'a-Colombia.
  \endgraf
  {\it E-mail address:} {\rm cardonaduvan01@javeriana.edu.co}  
 \endgraf
  Current affiliation:
 \endgraf
  King Fahd University of Petroleum and Minerals 
   \endgraf
    Dhahran,  Saudi Arabia.
  }

\thanks{{$^{1}$ {Website:}
\url{https://sites.google.com/site/duvancardonas/home}}
\newline  Duv\'an Cardona$^{1}$  has been  supported  by the Department of Mathematics of the King Fahd University of Petroleum and Minerals (KFUPM). He also has been supported by the FWO  Odysseus  1  grant  G.0H94.18N:  Analysis  and  Partial Differential Equations and by the Methusalem programme of the Ghent University Special Research Fund (BOF)
(Grant number 01M01021), by the FWO Fellowship
Grant No 1204824N and by the FWO Grant K183725N of the Belgian Research Foundation FWO. He also has been supported by the Oberwolfach Leibniz Fellow of the Mathematical Institute of Research of Oberwolfach, MFO-Germany, Project F2511 (2026) and by the Department of Mathematics of the Pontificia Universidad Javeriana, Bogot\'a-Colombia.
\newline $^{*}$\url{Orcid: 0000-0002-8148-2624}}

\begin{abstract} We formulate a local smoothing conjecture for bilinear Fourier integral operators in every dimension $d \ge 2,$ derived from the celebrated linear case due to Sogge, which we refer to as the \emph{bilinear smoothing conjecture}.  We show that the linear local smoothing conjecture implies this bilinear version. As a consequence of our approach and due to the recent progress on the subject, we establish local smoothing estimates for bilinear Fourier integral operators in dimension $d=2,$ that is, on $\mathbb{R}^2_x \times \mathbb{R}_t$. Also, a partial progress is presented for the high-dimensional case $d\geq 3.$ In particular, our method allows us to deduce that the bilinear local smoothing conjecture holds for all odd dimensions $d$.
\newline
\newline
\noindent \textit{Keywords.} Local smoothing conjecture, bilinear Fourier integral operators, bilinear smoothing conjecture, cinematic curvature condition, wave equation.
\newline
\noindent \textit{2020 Mathematics Subject Classification.} Primary 35S30; 42B20; Secondary 42B37, 42B35.
\end{abstract} \maketitle
\allowdisplaybreaks
\tableofcontents


\section{Introduction}
This note proposes a conjecture on local smoothing estimates for bilinear Fourier integral operators (FIOs) derived from the linear case observed by Sogge \cite{Sogge1991}. Recent progress in the field (see \cite{BeltranHickmanSogge,GaoLiuMiaoXi2023}) allows our methods to fully prove the conjecture for $d=2$ and to advance the problem partially for $d \geq 3$. In particular, we deduce that the bilinear local smoothing conjecture is true when $d$ is odd. We first recall the linear local smoothing conjecture \cite{Sogge1991} and then introduce its bilinear form. Our main result is Theorem \ref{main:theorem:2}; our approach investigates the smoothing properties of high and of low frequencies of a bilinear Fourier integral operator. For the analysis of the high frequency portion, among other things, we make use of a boundedness theorem for maximal functions proved by Bourgain in \cite{Bourgain1985}.

According to H\"ormander \cite{Hormander1971Ac}, a Fourier integral operator $T$  of order $m\in \mathbb{R}$ and with real phase function $\phi,$   is a continuous linear operator $T:C^\infty_0(\mathbb{R}^d)\rightarrow C^\infty(\mathbb{R}^d\times \mathbb{R}) $  of  the form
\begin{equation}\label{FIO:Quantisation}    T_a^\phi f(x,t)=\smallint\limits_{\mathbb{R}^d}e^{i\phi(x,t,\xi)}a(x,t,\xi)\widehat{f}(\xi)d\xi,
\end{equation} where $\widehat{f}$ is the Fourier transform of $f\in C^\infty_0(\mathbb{R}^d).$ The symbol $a:=a(x,t,\xi)$ is of Kohn-Nirenberg order $m\in \mathbb{R},$ that is $a$ satisfies the symbol inequalities
\begin{equation}
    \forall\alpha\in \mathbb{N}_0,\,\forall\beta \in \mathbb{N}_0,\,\,|\partial_{x}^\beta\partial_\xi^\alpha a(x,t,\xi)|\leq C_{\alpha,\beta}(1+|\xi|)^{m-|\alpha|}.
\end{equation} The symbol $a$ has compact support in $(x,t).$ We assume that in an open neighbourhood $V$ of $\pi(\textnormal{supp}(a)):=\{(x,t):(x,t,\xi)\in \textnormal{supp}(a)\},$ (with $\overline{V}$ being a compact set) the real-valued phase function $\phi$ is homogeneous of degree $1$ in $\xi\neq 0,$ smooth in $(x,t,\xi)\in {V}\times (\mathbb{R}^d\setminus \{0\}),$  and satisfies the non-degeneracy condition
\begin{equation}
  \forall (x,t,\xi)\in \overline{V}\times (\mathbb{R}^d\setminus \{0\}),\,  \textnormal{det}\left((\partial_{x,\xi}^2\phi(x,t,\eta))\right)_{1\leq i,j\leq d }\neq 0.
\end{equation}  
The corresponding $L^p$-theory for these operators can be summarised as follows, see Tao \cite{Tao} and Seeger, Sogge and Stein \cite{SSS}.
\begin{theorem}\label{Tao:SSS:theorem} Let $t\in \mathbb{R}$ be fixed in the support of the symbol $a:=a(x,t,\xi)$. Let $T=T_{a(\cdot,t,\cdot)}^{\phi(\cdot,t,\cdot)}$ be a Fourier integral operator of order $m\in \mathbb{R}$. Then:
\begin{itemize}
    \item[1.] If $m=-(d-1)/2,$  $T$ is of weak $(1,1)$ type:
    \begin{equation}\label{Tao:t}
       \exists C_t>0,\forall f\in L^1(\mathbb{R}^d),\,\,\Vert T f\Vert_{L^{1,\infty}(\mathbb{R}^d_x)}\leq C_t\Vert f \Vert_{L^1(\mathbb{R}^d_x)}.
    \end{equation}
    \item[2.] Let $1<p<\infty.$ Let $T$ be a Fourier integral operator of order $m=-(d-1)|\frac{1}{2}-\frac{1}{p}|.$  Then $T$ is bounded from $L^p(\mathbb{R}^d_x)$ to $L^p(\mathbb{R}^d_x).$ When $p=1,$ then $T$ is bounded from the Hardy space $H^1(\mathbb{R}^d_x)$ to $L^1(\mathbb{R}^d_x).$ As in \eqref{Tao:t}, these operator norms depend on $t.$
\end{itemize}
\end{theorem}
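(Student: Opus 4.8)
\emph{Proof sketch.} The plan is to reduce the statement to two endpoint estimates: the Hilbert-space bound $T\colon L^2(\mathbb{R}^d_x)\to L^2(\mathbb{R}^d_x)$ for operators of order $m=0$, and the Hardy-space endpoint $T\colon H^1(\mathbb{R}^d_x)\to L^1(\mathbb{R}^d_x)$ for order $m=-(d-1)/2$ (the Seeger--Sogge--Stein theorem \cite{SSS}). Stein's analytic interpolation theorem, applied to the analytic family of Fourier integral operators obtained by varying the order, together with duality, then yields item~2 for the full range $1<p<\infty$: the order $m=-(d-1)|\tfrac12-\tfrac1p|$ is exactly the linear interpolant between $0$ (at $\tfrac1p=\tfrac12$) and $-(d-1)/2$ (at $\tfrac1p=1$), and symmetrically for $p\ge 2$. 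The weak $(1,1)$ bound in item~1 is strictly finer than the $H^1$ endpoint and requires a dedicated Calderón--Zygmund argument, due to Tao \cite{Tao}, which I sketch last.

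For $p=2$ and $m=0$, I would exploit the non-degeneracy hypothesis $\det(\partial^2_{x,\xi}\phi)\neq 0$ on the compact set $\overline V$ --- which makes the canonical relation of $T$ locally the graph of a canonical transformation --- via a $TT^{*}$ argument. After a Littlewood--Paley decomposition $T=\sum_{j\ge 0}T_j$, with the symbol of $T_j$ supported where $|\xi|\sim 2^j$, the Schwartz kernel of $T_jT_j^{*}$ is an oscillatory integral whose phase is non-stationary at scale $2^{-j}$ because of the non-degeneracy; repeated integration by parts bounds this kernel by $C_t\,2^{jd}(1+2^{j}|x-y|)^{-N}$ for every $N$, Schur's test gives $\|T_j\|_{L^2\to L^2}\lesssim C_t$ uniformly in $j$, and the almost orthogonality of the pieces $T_j$ closes the estimate; alternatively one may invoke H\"ormander's $L^2$ theorem \cite{Hormander1971Ac} directly.

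For the Hardy-space endpoint I would carry out the classical two-step frequency decomposition. Within each dyadic piece $T_j$ perform the \emph{second dyadic decomposition} of the annulus $|\xi|\sim 2^j$ into $\sim 2^{j(d-1)/2}$ angular sectors of aperture $\sim 2^{-j/2}$, writing $T_j=\sum_{\nu}T_j^{\nu}$. A second-order Taylor expansion of the homogeneous phase $\phi(x,\cdot)$ in the angular variable about the centre $\omega_\nu$ of a sector --- the quadratic term being $O(1)$ on the sector while the cubic and higher terms are $O(2^{-j/2})$ --- shows that the Schwartz kernel $K_j^{\nu}(x,y)$ is, up to a rapidly decaying tail, concentrated on a curved plate determined by the canonical transformation, of thickness $\sim 2^{-j}$ in the direction $\omega_\nu$ and $\sim 2^{-j/2}$ in the remaining $d-1$ directions, hence of measure $\sim 2^{-j(d+1)/2}$. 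Given an $H^1$ atom $\mathfrak a$ supported on a ball of radius $\varrho$: the frequencies $2^j\le\varrho^{-1}$ are controlled crudely using the size of the symbol and the cancellation of $\mathfrak a$, while for $2^j>\varrho^{-1}$ one uses that $T_j^{\nu}\mathfrak a$ is essentially supported in a single such plate, so that Cauchy--Schwarz gives $\|T_j^{\nu}\mathfrak a\|_{L^1}\lesssim 2^{-j(d+1)/4}\|T_j^{\nu}\mathfrak a\|_{L^2}$; the choice $m=-(d-1)/2$ is precisely what balances this gain against the $L^2$-operator-norm loss of $T_j^{\nu}$ and the count $2^{j(d-1)/2}$ of sectors, once the almost orthogonality of the $T_j^{\nu}$ in $L^2$ is used. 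Summing in $j$ and $\nu$, and then over atoms, yields $T\colon H^1(\mathbb{R}^d_x)\to L^1(\mathbb{R}^d_x)$.

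For weak $(1,1)$, I would run a Calderón--Zygmund decomposition $f=g+\sum_{Q}b_Q$ of $f\in L^1(\mathbb{R}^d_x)$ at height $\alpha$ over a Whitney family $\{Q\}$, with $\|g\|_{L^2}^2\lesssim\alpha\|f\|_{L^1}$, $\mathrm{supp}\,b_Q\subseteq Q$, $\int b_Q=0$, and $\sum_Q\|b_Q\|_{L^1}\lesssim\|f\|_{L^1}$. The good term $Tg$ is handled by the $L^2$ bound and Chebyshev's inequality. For each cube $Q$ of side $\ell(Q)$, form a curved enlargement $Q^{*}$ by fattening $Q$ into the union, over scales $2^j\ge\ell(Q)^{-1}$, of the plates that the kernel attaches to $Q$; one checks $|Q^{*}|\lesssim|Q|$, so that $\sum_Q|Q^{*}|\lesssim\alpha^{-1}\|f\|_{L^1}$, and it remains only to estimate $\sum_Q Tb_Q$ on the complement of $\bigcup_Q Q^{*}$. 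There the plate localization of the high-frequency pieces, together with the cancellation of $b_Q$ absorbing the low-frequency pieces (scales $2^j<\ell(Q)^{-1}$), gives $\int_{(\bigcup_Q Q^{*})^{c}}\big|\sum_Q Tb_Q\big|\lesssim\|f\|_{L^1}$, and Chebyshev closes the argument. The main obstacle throughout is the plate geometry: establishing rigorously that the second-dyadic operators $T_j^{\nu}$ concentrate on $2^{-j}\times(2^{-j/2})^{d-1}$ plates --- this is where the non-degeneracy of $\phi$ and the careful angular Taylor expansion enter --- and then summing the overlapping plates with exactly the right powers of $2^{j}$; it is here, and essentially only here, that the normalisation $m=-(d-1)/2$ is forced and cannot be improved.
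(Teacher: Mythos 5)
The paper does not prove Theorem~\ref{Tao:SSS:theorem}: it is stated as a background result and attributed directly to Seeger--Sogge--Stein \cite{SSS} and Tao \cite{Tao}, with the $L^2$ case credited to Duistermaat--H\"ormander \cite{Duistermaat-Hormander:FIOs-2}. There is therefore no in-paper proof against which to compare your argument; what you have written is a reconstruction of the proofs in those references, and as such it is accurate in outline. The $TT^{*}$ plus Littlewood--Paley plus Cotlar--Stein route (or the direct appeal to H\"ormander's $L^2$ theorem) for $m=0$, the second dyadic decomposition into $\sim 2^{j(d-1)/2}$ sectors of aperture $2^{-j/2}$ together with the plate-geometry estimate for the $H^1\to L^1$ endpoint, Stein analytic interpolation in the order parameter plus duality to cover $1<p<\infty$, and Tao's Calder\'on--Zygmund argument with curved exceptional sets $Q^{*}$ for the weak $(1,1)$ endpoint are all exactly the mechanisms used in \cite{SSS} and \cite{Tao}. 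Two small remarks worth flagging if you were to flesh this into a full proof: first, the plate estimate requires the symbol to be compactly supported in $x$ (which the paper assumes) so that the curved plates have bounded overlap and finite total measure; second, in the interpolation step you need the Fefferman--Stein formulation of complex interpolation adapted to the $H^1/L^1$ endpoint, together with the observation that the complex family $z\mapsto T_{a}^{\phi}(1-\Delta)^{-(d-1)z/4}$ has admissible growth, but both points are standard and handled in \cite{SSS}.
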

The case $p=2$ in Theorem \ref{Tao:SSS:theorem} was proved by Duistermaat and H\"ormander  \cite{Duistermaat-Hormander:FIOs-2}, see also Èskin \cite{Eskin1970}. Other references related to the global boundedness of Fourier integral operators are presented in Section \ref{preliminaries}. For now, we emphasise that, in terms of the $L^p$-regularity, Theorem \ref{Tao:SSS:theorem} shows that Fourier integral operators lose $s=(d-1)|\frac{1}{2}-\frac{1}{p}|$ derivatives. In other words, on the scale of Sobolev spaces $L^p_s,$ a Fourier integral operator $T=T_{a(\cdot,t,\cdot)}^{\phi(\cdot,t,\cdot)}$ of order $m\in \mathbb{R},$ satisfies the time-dependent estimate
\begin{equation}\label{ref:Sogge}
    \Vert T f\Vert_{L^p_s(\mathbb{R}^d_x)}\lesssim_t   \Vert f\Vert_{L^p_{s+m+(d-1)|\frac{1}{2}-\frac{1}{p}|}(\mathbb{R}^d_x)},\,1<p<\infty.
\end{equation} It was first observed by Sogge \cite{Sogge1991}, in the context of the wave equation, that averaging in time on the left-hand side of \eqref{ref:Sogge} allows one to improve the loss of regularity. This observation led to the formulation of the (linear) {\it local smoothing conjecture} for the wave operator. If true, this conjecture would represent a remarkable breakthrough, as it formally implies several other major open problems in harmonic analysis, including the Bochner–Riesz, Fourier restriction, and Kakeya conjectures, see Tao \cite{Tao:BR:1999}.

In the setting of Fourier integral operators, the main assumption on the phase to formulate the linear local smoothing conjecture is the following {\it cinematic curvature condition}.
 \begin{definition}[Cinematic curvature condition]\label{CCC}
     We say that $\phi:=\phi(x,t,\xi)\in C^\infty(\mathbb{R}^d_x\times \mathbb{R}_t\times (\mathbb{R}^d\setminus\{0\}))$ satisfies the {\it cinematic curvature condition} on a compact set $\mathcal K\subset\mathbb{R}^d\times \mathbb{R}_t $ if the following conditions are satisfied:
 \begin{itemize}
     \item(i) the phase $\phi$ is non-degenerate, that is
     \begin{equation}
         \textnormal{rank}(\partial_{x,\eta}^2\phi(x,t,\eta))=d,\quad (x,t)\in \mathcal K,\quad \eta\in \mathbb{R}^d\setminus\{0\}.
     \end{equation}
     \item(ii) Consider the Gauss mapping $G:\mathcal K\times (\mathbb{R}^d\setminus\{0\})\rightarrow \mathbb{S}^d,$ where $$G(x,t,\eta)=\frac{G_0(x,t,\eta)}{|G_0(x,t,\eta)|},$$ and 
     $$G_0(x,t,\eta):=\wedge_{1\leq j\leq d}\partial_{\eta_j}\partial_{x,t}\phi(x,t,\eta).$$ The following rank condition 
     \begin{equation}
         \textnormal{rank}(\partial_{\eta\eta}^2\langle \partial_{x,t}\phi(x,t,\eta),G(z,\eta_0) \rangle|_{\eta=\eta_0})=d-1,
     \end{equation}holds for every $(x,t,\eta_0)\in \mathcal K\times \mathbb{R}^d,$ $\eta_0\neq 0.$
 \end{itemize}
 \end{definition} Adapted to Fourier integral operators, the linear  {\it smoothing conjecture} can be stated as follows.  In order to formulate it we use the following notation for its critical index:
\begin{equation}
   \overline{p}_d:=\frac{2(d+1)}{d-1}, \textnormal{ if } \, d \textnormal{ is odd, and, }    \overline{p}_d:=\frac{2(d+2)}{d},  \textnormal{ if }\, d \textnormal{ is even}.
\end{equation}
\begin{conjecture}[Linear smoothing conjecture for FIOs]\label{LSC} Let $d\geq 2.$ Let $T_a^\phi $ be a Fourier integral operator of order $m\in \mathbb{R},$ and with non-degenerate phase function satisfying the {\it cinematic curvature condition}, see Definition \ref{CCC}. For any $p\geq \overline{p}_d,$ and $\sigma<\frac{1}{p},$ we have that 
\begin{equation}
    \Vert T_a^\phi f \Vert_{L^p(\mathbb{R}^d\times \mathcal{I})}\lesssim \Vert f\Vert_{L^p_{s}(\mathbb{R}^d)},\, s=m+(d-1)\left(\frac{1}{2}-\frac{1}{p_i}\right)-\sigma.
\end{equation} 
\end{conjecture}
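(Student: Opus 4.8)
Since Conjecture \ref{LSC} is at present a theorem only for $d$ odd and for $d=2$, and is open for even $d\ge 4$, what follows is the strategy that settles the accessible cases and locates the obstruction in the rest. First I would perform a Littlewood--Paley decomposition $f=\sum_{\lambda}f_{\lambda}$ into dyadic frequency pieces $|\xi|\sim\lambda\ge 1$, so that $T_{a}^{\phi}f_{\lambda}$ has space-time frequencies of size $\sim\lambda$. Since the order-$m$ amplitude produces a gain $\lambda^{m}$ on each piece, it suffices to prove, uniformly in dyadic $\lambda$, the fixed-frequency estimate
\[
\Vert T_{a}^{\phi}f_{\lambda}\Vert_{L^{p}(\mathbb{R}^{d}\times\mathcal{I})}\lesssim_{\epsilon}\lambda^{\,m+(d-1)\left(\frac12-\frac1p\right)-\frac1p+\epsilon}\,\Vert f_{\lambda}\Vert_{L^{p}(\mathbb{R}^{d})},
\]
and then sum in $\lambda$ by a Littlewood--Paley square-function argument (here $p\ge\overline{p}_{d}>2$); the strict inequality $\sigma<1/p$ provides exactly the room needed to absorb the $\lambda^{\epsilon}$ loss after summation. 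Moreover, interpolating the endpoint case $p=\overline{p}_{d}$ with the fixed-time $L^{q}$ bounds of Theorem \ref{Tao:SSS:theorem} for $q$ large reduces the whole problem to the single critical exponent $p=\overline{p}_{d}$.

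The second step is to transfer this single-scale estimate to a statement about the cone. Localising $(x,t)$ to a small ball and $\xi/|\xi|$ to a cap of radius $\lambda^{-1/2}$, the non-degeneracy hypothesis together with H\"ormander's equivalence-of-phase-functions machinery allows one to conjugate $T_{a}^{\phi}$, modulo operators of lower order, into a half-wave--type operator whose space-time frequencies concentrate near a curved cone $\{\tau=q(x,t,\xi)\}$ with $q$ homogeneous of degree one in $\xi$; the cinematic curvature condition of Definition \ref{CCC}(ii) is precisely the hypothesis guaranteeing that this cone carries the $(d-1)$ non-vanishing principal curvatures of the Euclidean light cone in $\mathbb{R}^{d+1}$. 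Carrying this out \emph{uniformly in $\lambda$}, in the variable-coefficient category, is the technical heart; it is the content of the work of Beltr\'an--Hickman--Sogge \cite{BeltranHickmanSogge}, and it rests on a parabolic rescaling compatible with the curved cone together with an induction on scales.

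With the problem transferred to the cone, cover the cone by $\lambda^{-1/2}$-plates, apply the sharp $\ell^{2}$-decoupling inequality for the cone of Bourgain--Demeter in $\mathbb{R}^{d+1}$ (valid with only a $\lambda^{\epsilon}$ loss at the critical decoupling exponent $\frac{2(d+1)}{d-1}$), and bound each plate's contribution by a local $L^{2}$-estimate exploiting the curvature. This yields the conjectured bound for $p\ge\frac{2(d+1)}{d-1}$, which for odd $d$ coincides with $\overline{p}_{d}$; hence Conjecture \ref{LSC} holds in every odd dimension. When $d=2$ the critical exponent $\overline{p}_{2}=4$ lies strictly below the decoupling exponent $6$, so $\ell^{2}$-decoupling by itself is not enough: one instead feeds in the sharp square function estimate for the cone in $\mathbb{R}^{3}$ of Guth--Wang--Zhang (proved by polynomial partitioning and a broad--narrow analysis), transplanted to the variable-coefficient setting as in Beltr\'an--Hickman--Sogge, to push the admissible range down to $p\ge 4$; this completes the case $d=2$.

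The main obstacle is the even dimensions $d\ge 4$. There $\overline{p}_{d}=\frac{2(d+2)}{d}$ is strictly smaller than the decoupling exponent $\frac{2(d+1)}{d-1}$, so Conjecture \ref{LSC} is not implied by $\ell^{2}$-decoupling, and establishing it would require either the sharp $\ell^{p}L^{p}$ decoupling for the cone in $\mathbb{R}^{d+1}$ or a higher-dimensional analogue of the Guth--Wang--Zhang square function estimate, neither of which is presently available. Thus the conjecture remains genuinely open precisely in the range $\overline{p}_{d}\le p<\frac{2(d+1)}{d-1}$ with $d$ even and $d\ge 4$; this is why the present paper takes Conjecture \ref{LSC} only as a hypothesis in the dimensions where it is not yet a theorem.
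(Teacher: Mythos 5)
The statement you were asked to address is Conjecture~\ref{LSC}: it is a \emph{conjecture}, not a theorem, and the paper contains no proof of it. The paper merely states it as the hypothesis driving Theorem~\ref{main:theorem}, and then cites the cases in which it is known to hold --- $d=2$ via Gao--Liu--Miao--Xi \cite{GaoLiuMiaoXi2023} and odd $d$ via Beltr\'an--Hickman--Sogge \cite{BeltranHickmanSogge} --- to obtain Corollary~\ref{corollary:d:2}. Your submission correctly recognises this and, rather than producing a fictitious proof of an open problem, gives a survey of the proof strategy in the known cases and pinpoints the obstruction. That is the appropriate response here: there is simply no proof in the paper to compare against.

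Your technical outline is essentially accurate: Littlewood--Paley reduction to a single dyadic scale with an $\epsilon$-loss absorbed by $\sigma<1/p$, reduction via the cinematic curvature condition to a cone with $d-1$ non-vanishing principal curvatures in $\mathbb{R}^{d+1}$, $\ell^2$-decoupling of Bourgain--Demeter giving the range $p\geq\frac{2(d+1)}{d-1}$ (hence all odd $d$), and the sharp square function estimate of Guth--Wang--Zhang for the cone in $\mathbb{R}^3$ to push $d=2$ down to $p\geq 4$; and the obstruction for even $d\geq 4$ is precisely that $\overline{p}_d=\frac{2(d+2)}{d}<\frac{2(d+1)}{d-1}$, a range not reachable by $\ell^2$-decoupling alone. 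One attribution correction: the transplantation of the $d=2$ square-function input to the variable-coefficient FIO setting that actually closes the $d=2$ case of the conjecture is due to Gao--Liu--Miao--Xi \cite{GaoLiuMiaoXi2023}, not Beltr\'an--Hickman--Sogge (the latter is responsible for the odd-$d$ decoupling argument in the variable-coefficient setting). With that adjustment your account matches the paper's use of the literature.
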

\begin{remark}
    Examples constructed by Bourgain \cite{Bourgain1991,Bourgain1995} can be used to identify Fourier integral operators that satisfy the cinematic curvature condition but for which local smoothing fails for all $\sigma < \frac{1}{p},$ whenever $p < \overline{p}_d$. Compared with Theorem \ref{Tao:SSS:theorem},  Conjecture \ref{LSC} claims a gain of regularity of order $\sigma<\frac{1}{p}.$ We illustrate some values of the critical index $\overline{p}_d$ in Table \ref{tab:critical-pd}.
\begin{table}[h]
\centering
\renewcommand{\arraystretch}{1.3} 
\setlength{\tabcolsep}{6pt}      
\begin{tabular}{c|ccccccccccccccccccc}
$d$ & 2 & 3 & 4 & 5 & 6 & 7 & 8 & 9 & 10 & 11 & 12 & 13 & 14 & 15 & 16 & 17 & 18 & 19 & 20 \\ \hline
$\overline{p}_d$ & $4$ & $4$ & $3$ & $3$ & $\frac{8}{3}$ & $\frac{8}{3}$ & $\frac{5}{2}$ & $\frac{5}{2}$ & $\frac{12}{5}$ & $\frac{12}{5}$ & $\frac{7}{3}$ & $\frac{7}{3}$ & $\frac{16}{7}$ & $\frac{16}{7}$ & $\frac{9}{4}$ & $\frac{9}{4}$ & $\frac{20}{9}$ & $\frac{20}{9}$ & $\frac{11}{5}$
\end{tabular}
\caption{Critical index $\overline{p}_d$ for dimensions $d = 2$ to $20$.}
\label{tab:critical-pd}
\end{table} 
\end{remark} 
Let us introduce the required notation to formulate the bilinear counterpart to Conjecture \ref{LSC} by adopting the notion of bilinear Fourier integral operators as studied e.g. by Grafakos and Peloso \cite{Grafakos:Peloso:BFIO}, and for the form of the operators formulated here, see also Rodríguez-López, Rule, and  Staubach  \cite{Staubach2021}. 

Let $a\in C^\infty(\mathbb{R}^d_x\times \mathbb{R}_t\times \mathbb{R}^d_\xi\times \mathbb{R}^d_\eta )$ be a {\it bilinear symbol}. We say that $a$ has  order $m\in \mathbb{R},$ if it satisfies the following estimates
\begin{equation}
    |\partial_{x,t}^\beta \partial_\xi^{\alpha_1}\partial_\eta^{\alpha_2}a(x,t,\xi,\eta)|\lesssim_{\beta,\alpha_1,\alpha_2} (1+|\xi|+|\eta|)^{m-|\alpha_1|-|\alpha_2|}.
\end{equation}The Kohn-Nirenberg bilinear class $S^m_{1,0}:=S^m_{1,0}(\mathbb{R}^d_x\times \mathbb{R}_t\times \mathbb{R}^d_\xi\times \mathbb{R}^d_\eta)$  is the set formed by those bilinear symbols of order $m.$

 Let us consider the {\it bilinear Fourier integral operator }
 \begin{align}
     T^{\phi_1,\phi_2}_a(f,g)(x,t)=\smallint_{\mathbb{R}^d}\smallint_{\mathbb{R}^d}e^{2\pi i(\phi_1(x,t,\xi)+\phi_2(x,t,\eta)) }a(x,t,\xi,\eta)\widehat{f}(\xi)\widehat{g}(\eta)d\xi d\eta,
 \end{align} where the symbol $a$ is compactly supported in $(x,t)\in \mathbb{R}^d\times \mathbb{R},$ and each phase function $\phi_j(x,t,\xi)\in C^\infty(\mathbb{R}^d_x\times \mathbb{R}_t\times (\mathbb{R}^d\setminus\{0\}))$ is positively homogeneous of order $1$ in $\xi\in \mathbb{R}^d\setminus\{0\},$ and they satisfy the {\it cinematic curvature condition}, see Definition \ref{CCC}.

 Here, we will denote by $\mathcal{I}=[a,b]$ an arbitrary non-empty finite interval and then $-\infty<a<b<\infty.$ To reduce a little bit our analysis, since the symbol $a:=a(x,t,\xi,\eta)$ is assumed to be compactly supported in $(x,t),$ and without loss of generality we will choose a compact set $K\subset \mathbb{R}^d,$ such that,
 \begin{equation}
     (x,t)-\textnormal{supp}(a):=\{(\tilde x,\tilde t)\in \mathbb{R}^d\times \mathbb{R}: (\tilde x,\tilde t)\in \textnormal{supp}(a)\}\subset  K\times \mathcal{I},
 \end{equation} increasing the length of $\mathcal{I}$ if needed. It becomes clear that 
 $$ \Vert T^{\phi_1,\phi_2}_a(f,g) \Vert_{L^{p}(\mathbb{R}^d_x\times \mathbb{R}_t )}= \Vert T^{\phi_1,\phi_2}_a(f,g) \Vert_{L^{p}( K\times \mathcal{I} )}.$$

This fact will not be used in our formulation of the bilinear smoothing conjecture but will be crucial in our further analysis. 
Based on the linear smoothing conjecture for Fourier integral operators, it is natural to suppose that the following result holds in any dimension $d\geq 2$.
 \begin{conjecture}[Bilinear smoothing conjecture for FIOs]\label{Conjecture:bilinear} Let $d\geq 2.$  For any $p_1,p_2\geq \overline{p}_d,$  let $\sigma_1<\frac{1}{p_1},\,\sigma_2<\frac{1}{p_2}.$ Let $T^{\phi_1,\phi_2}_a$ be a bilinear Fourier integral operator, with  non-degenerate phase functions satisfying the {\it cinematic curvature condition}. Let $a\in S^m_{1,0}$ be its symbol, having  order $m<0,$ and being compactly supported in $(x,t)\in \mathbb{R}^d\times \mathbb{R}.$ Then 
 \begin{equation}  \Vert T^{\phi_1,\phi_2}_a(f,g) \Vert_{L^{p}(\mathbb{R}^d_x\times \mathbb{R}_t )}\lesssim \Vert f\Vert_{L^{p_1}_{s_1}(\mathbb{R}^d)} \Vert g\Vert_{L^{p_2}_{s_2}(\mathbb{R}^d)},
 \end{equation} where
 \begin{equation}\label{p:p1:p2}
\frac{1}{p}=\frac{1}{p_1}+\frac{1}{p_2},\quad   s_i=m_i+(d-1)\left(\frac{1}{2}-\frac{1}{p_i}\right)-\sigma_i,\quad m=m_1+m_2,\quad m_1\leq0, m_2<0.
 \end{equation}
     
 \end{conjecture}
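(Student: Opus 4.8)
The plan is to verify that Conjecture~\ref{Conjecture:bilinear} follows from Conjecture~\ref{LSC}; this is the implication announced in the abstract. The idea is a Coifman--Meyer-type decomposition of the bilinear symbol into a rapidly convergent sum of products of two frequency-localized \emph{linear} Fourier integral operators carrying the phases $\phi_1$ and $\phi_2$, to each factor of which one applies Conjecture~\ref{LSC}, followed by H\"older's inequality in $(x,t)$ with $\tfrac1p=\tfrac1{p_1}+\tfrac1{p_2}$. First I would use the reductions already recorded: $a$ may be taken supported in $(x,t)\in K\times\mathcal I$, so that the left-hand norm is an $L^p(K\times\mathcal I)$ norm; fix $\psi\in C^\infty_0$ with $\psi\equiv 1$ on $K\times\mathcal I$, and Littlewood--Paley decompose in \emph{both} frequency variables, $a=\sum_{j,\ell\ge 0}a_{j,\ell}$ with $a_{j,\ell}$ supported where $|\xi|\sim 2^{j}$, $|\eta|\sim 2^{\ell}$ (the index $0$ meaning $\lesssim 1$). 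The block $a_{0,0}$ is elementary, handled by the standard dyadic resolution $|\xi|\sim 2^{-i}$ of the harmless singularity of the degree-one phases at the origin: after rescaling, each frequency-compact piece has a Schwartz-type kernel whose $L^{p_i'}$-norm is summable in $i$, so $T^{\phi_1,\phi_2}_{a_{0,0}}:L^{p_1}\times L^{p_2}\to L^\infty(K\times\mathcal I)$, and on low frequencies $\|f\|_{L^{p_1}_{s_1}},\|g\|_{L^{p_2}_{s_2}}$ control $\|P_{\le 1}f\|_{L^{p_1}},\|P_{\le 1}g\|_{L^{p_2}}$. By the symmetry $\xi\leftrightarrow\eta$ it then remains to treat the diagonal block $\sum_{|j-\ell|\le C}a_{j,\ell}$ and one off-diagonal block $\sum_{j\ge 1}\sum_{0\le\ell\le j-C}a_{j,\ell}$.

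For the diagonal block I rescale $\xi=2^{j}\omega,\ \eta=2^{j}\omega'$ and observe that $2^{-jm}a(x,t,2^{j}\omega,2^{j}\omega')$ has all derivatives in $(x,t,\omega,\omega')$ bounded uniformly in $j$, with $(x,t)$-support in $K\times\mathcal I$. A Fourier expansion in the angular variables gives, on $|\xi|\sim|\eta|\sim 2^{j}$,
\[
 a_{j,j}(x,t,\xi,\eta)=2^{jm}\sum_{\nu,\mu}c^{j}_{\nu,\mu}(x,t)\,e^{i\nu\cdot\xi/2^{j}}\widetilde\chi(\xi/2^{j})\,e^{i\mu\cdot\eta/2^{j}}\widetilde\chi(\eta/2^{j}),
\]
with $\|c^{j}_{\nu,\mu}\|_{C^{k}(K\times\mathcal I)}\lesssim_{k,M}(1+|\nu|+|\mu|)^{-M}$ uniformly in $j$. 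Splitting $2^{jm}=2^{jm_1}\cdot 2^{jm_2}$ and keeping $\psi$ so as to remain within the class of $(x,t)$-compactly supported symbols, set $\beta^{1,j}_{\nu,\mu}(x,t,\xi)=2^{jm_1}c^{j}_{\nu,\mu}(x,t)\psi(x,t)\,e^{i\nu\cdot\xi/2^{j}}\widetilde\chi(\xi/2^{j})$ and $\beta^{2,j}_{\mu}(x,t,\eta)=2^{jm_2}\psi(x,t)\,e^{i\mu\cdot\eta/2^{j}}\widetilde\chi(\eta/2^{j})$; since $T^{\phi_1,\phi_2}_{a_{j,j}}(f,g)$ is supported in $K\times\mathcal I$ and $\psi\equiv 1$ there, one has the exact identity $T^{\phi_1,\phi_2}_{a_{j,j}}(f,g)=\sum_{\nu,\mu}\bigl[T^{\phi_1}_{\beta^{1,j}_{\nu,\mu}}f\bigr]\cdot\bigl[T^{\phi_2}_{\beta^{2,j}_{\mu}}g\bigr]$. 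Here $\beta^{1,j}_{\nu,\mu}$ is a Kohn--Nirenberg symbol of order $m_1$, compactly supported in $(x,t)$ and localized to $|\xi|\sim 2^{j}$, whose finitely many relevant seminorms are $\lesssim(1+|\nu|+|\mu|)^{-M}$ once $M$ is taken large enough to absorb the polynomial growth produced by differentiating $e^{i\nu\cdot\xi/2^{j}}$ (a modulation which in any case merely translates $f$); likewise $\beta^{2,j}_\mu$ is of order $m_2$ — all bounds uniform in $j$. Now, since $\sigma_i<\tfrac1{p_i}$, fix $\sigma_i'\in(\sigma_i,\tfrac1{p_i})$ and put $\delta_i:=\sigma_i'-\sigma_i>0$. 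Applying Conjecture~\ref{LSC} (with constant depending on the symbol only through finitely many seminorms) to $T^{\phi_1}_{\beta^{1,j}_{\nu,\mu}}(P_{2^j}f)$ at exponent $p_1\ge\overline p_d$ with gain $\sigma_1'$, and using $\|P_{2^j}h\|_{L^{p_1}_{s_1-\delta_1}}\lesssim 2^{-j\delta_1}\|h\|_{L^{p_1}_{s_1}}$, we obtain $\bigl\|T^{\phi_1}_{\beta^{1,j}_{\nu,\mu}}f\bigr\|_{L^{p_1}(\mathbb R^d\times\mathcal I)}\lesssim (1+|\nu|+|\mu|)^{-M}\,2^{-j\delta_1}\|f\|_{L^{p_1}_{s_1}}$, and likewise $\bigl\|T^{\phi_2}_{\beta^{2,j}_{\mu}}g\bigr\|_{L^{p_2}(\mathbb R^d\times\mathcal I)}\lesssim 2^{-j\delta_2}\|g\|_{L^{p_2}_{s_2}}$, with $s_i=m_i+(d-1)(\tfrac12-\tfrac1{p_i})-\sigma_i$ exactly as in \eqref{p:p1:p2}. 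H\"older's inequality in $(x,t)$, summation in $(\nu,\mu)$, and summation in $j$ against $2^{-j(\delta_1+\delta_2)}$ then dispose of the diagonal block.

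The off-diagonal block $|\xi|\sim 2^{j}\gg|\eta|\sim 2^{\ell}$ runs in the same way, with one change: after the angular expansion one writes $2^{jm}=2^{(j-\ell)m_2}\cdot 2^{jm_1}\cdot 2^{\ell m_2}$, keeping the second factor localized to frequency $2^{\ell}$. This inserts the extra factor $2^{(j-\ell)m_2}$, and since $m_2<0$ the double sum $\sum_{j\ge 1}\sum_{0\le\ell\le j-C}2^{(j-\ell)m_2}\,2^{-j\delta_1}\,2^{-\ell\delta_2}$ converges once $\delta_2$ is chosen $<|m_2|$; the symmetric block (with $|\eta|$ dominating) is handled identically, the hypothesis $m_1\le 0$ ensuring convergence there as well.

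I expect the main obstacle to be not any individual inequality but the bookkeeping that makes the factorization legitimate and the final summations convergent. One must carry out the angular decomposition so that the resulting linear symbols are genuine $(x,t)$-compactly supported Kohn--Nirenberg symbols with seminorms bounded uniformly in the dyadic scale and rapidly decaying in the angular indices; one must distribute the orders $m=m_1+m_2$ between the two factors so that $s_1,s_2$ come out precisely as in \eqref{p:p1:p2}; and, crucially, one must exploit the strict inequalities $\sigma_i<\tfrac1{p_i}$ — permitted because $\overline p_d$ is the sharp threshold in the linear conjecture — to manufacture a geometric gain $2^{-j\delta_i}$ in the dyadic scale, since for $p_i>2$ a naive Cauchy--Schwarz over $j$ is unavailable. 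Finally, when only partial linear local smoothing is known rather than the full Conjecture~\ref{LSC}, the same decomposition still applies and the summation over dyadic scales in the high-frequency regime can alternatively be absorbed using Bourgain's maximal bound \cite{Bourgain1985} for the frequency-truncated operators $h\mapsto\sup_{\ell}|T^{\phi_i}_{\widetilde\chi(\,\cdot\,/2^{\ell})}h|$; this route yields the unconditional cases $d=2$ and $d$ odd.
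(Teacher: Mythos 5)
Your approach is correct in its essentials but takes a genuinely different route from the paper. You perform a \emph{discrete} dyadic Littlewood--Paley decomposition in both frequency variables and then a Fourier-series (elementary-symbol) expansion in the rescaled frequencies, reducing the bilinear FIO exactly to sums of products $T^{\phi_1}_{\beta^{1}}f\cdot T^{\phi_2}_{\beta^{2}}g$ of frequency-localized linear FIOs; the paper instead imports the Rodr\'iguez-L\'opez--Rule--Staubach \emph{continuous} decomposition \eqref{main:rep}, which produces a $\smallint_0^1(\cdot)\,dt/t$ superposition of approximate compositions, and then handles the $dt/t$ integral by duality, Cauchy--Schwarz in $t$, Littlewood--Paley square functions (Remark~\ref{square:b:lp}), commutator estimates, and Bourgain's $L^2$-maximal bound (Theorem~\ref{Bourgain:l2:estimate}) for the operator $\sup_{0<t<1}|R_t P_t^{v,2}(\cdot)|$. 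The key observation in your route---absorbing the dyadic sum by conceding a small amount of regularity $\sigma_i\rightsquigarrow\sigma_i'\in(\sigma_i,1/p_i)$ to manufacture a geometric gain $2^{-j\delta_i}$ via Bernstein, since for $p_i>2$ orthogonality over scales is unavailable---is clean and fully replaces the maximal-function machinery; the paper's machinery, by contrast, yields the stronger quantitative transfer statement (Theorem~\ref{main:theorem:2}) that converts a \emph{single} linear estimate $(p^*,p_i,s_i)$ into the corresponding bilinear one, without needing any open interval of admissible $\sigma_i$'s. Your proposal implicitly uses that open interval (it is what lets you pick $\sigma_i'>\sigma_i$), so as written it proves the conditional Theorem~\ref{main:theorem} and the corollaries for $d=2$ and $d$ odd, but not the full quantitative Theorem~\ref{main:theorem:2}; you do flag this yourself and sketch recovering the missing summability via Bourgain's bound, but that sketch would need to be filled in to match the paper's level of generality. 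Two smaller bookkeeping points worth tightening: (i) the factorization $T^{\phi_1,\phi_2}_{a_{j,j}}(f,g)=\sum_{\nu,\mu}[T^{\phi_1}_{\beta^{1,j}_{\nu,\mu}}f][T^{\phi_2}_{\beta^{2,j}_{\mu}}g]$ is an identity only after restriction to $K\times\mathcal I$ where $\psi\equiv1$, which is indeed all you use; (ii) in the off-diagonal sum the requirement you impose, $\delta_2<|m_2|$, is not actually needed---$\delta_1>0>m_2$ already forces $m_2-\delta_1<0$, and the factor $2^{(j-\ell)m_2}$ together with $2^{-j\delta_1-\ell\delta_2}$ always sums; the hypothesis $m_1\le0$ enters only for the symmetric block with $|\eta|$ dominant, exactly as you note.
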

\begin{remark}Observe that in \eqref{p:p1:p2} one has that
$
   \boxed{ p\geq \frac{\overline{p}_d}{2}}.
$ Note that there is no loss of generality in assuming that $m<0$ in Conjecture \ref{LSC}, see Remark \ref{Rem:linear:conj:m}. Moreover, 
    note that if $a(x,t,\xi,\eta)$ has compact support in $\xi,$ there is no restriction in assuming that $m_2<0$ has arbitrarily large absolute value.   
    Conversely, if $a(x,t,\xi,\eta)$ has compact support in $\eta,$ there is no restriction in assuming that $m_1<0$ has arbitrarily large absolute value.
\end{remark}

Analysing the relationship between the two conjectures above is the main goal of this work. Indeed, we prove the following result.
 \begin{theorem}\label{main:theorem}
The linear smoothing Conjecture \ref{LSC} implies the bilinear smoothing Conjecture \ref{Conjecture:bilinear}.
 \end{theorem}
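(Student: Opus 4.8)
The plan is to assume Conjecture~\ref{LSC} and to decompose a general bilinear Fourier integral operator $T^{\phi_1,\phi_2}_a$ into an absolutely convergent sum of scalar multiples of \emph{products} of two \emph{linear} Fourier integral operators, one acting on $f$ and one on $g$, to which the linear conjecture applies factor by factor; the two factors are then combined by H\"older's inequality in $L^p(\R^d\times\mathcal I)$ with $\tfrac1p=\tfrac1{p_1}+\tfrac1{p_2}$, which is exactly the exponent relation in \eqref{p:p1:p2}. First I would fix Littlewood--Paley partitions $1=\sum_{k\ge0}\psi_k(\xi)=\sum_{l\ge0}\psi_l(\eta)$, with $\psi_k$ supported in $\{|\xi|\sim2^k\}$ for $k\ge1$, and write $a=\sum_{k,l\ge0}a_{k,l}$ with $a_{k,l}=a\,\psi_k(\xi)\psi_l(\eta)$. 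On $\operatorname{supp}a_{k,l}$ one has $1+|\xi|+|\eta|\sim2^{\max(k,l)}$, so the symbol inequalities for $a\in S^m_{1,0}$ give $|\partial_{x,t}^\beta\partial_\xi^{\alpha_1}\partial_\eta^{\alpha_2}a_{k,l}|\lesssim 2^{\max(k,l)m}\,2^{-k|\alpha_1|}2^{-l|\alpha_2|}$, uniformly in $k,l$. Rescaling $\xi=2^ku$, $\eta=2^lv$ turns $a_{k,l}(x,t,2^ku,2^lv)$ into a uniformly smooth function of $(u,v)$ supported in a fixed compact set, and expanding it in a Fourier series on a box produces
\[
a_{k,l}(x,t,\xi,\eta)=\sum_{\nu,\mu\in\Z^d}c_{k,l,\nu,\mu}(x,t)\,\widetilde\psi_k(\xi)\,\widetilde\psi_l(\eta)\,e^{i2^{-k}\nu\cdot\xi}\,e^{i2^{-l}\mu\cdot\eta},
\]
where $\widetilde\psi_k\equiv1$ on $\operatorname{supp}\psi_k$ is again an order-$0$ symbol localised at frequency $2^k$, and where integration by parts in $(u,v)$ yields $|\partial_{x,t}^\beta c_{k,l,\nu,\mu}(x,t)|\lesssim_{\beta,N}2^{\max(k,l)m}(1+|\nu|+|\mu|)^{-N}$ for every $N$. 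Since $a$ has compact support in $(x,t)$ I would further factor $c_{k,l,\nu,\mu}=\chi_0\chi_1\chi_2\,\widetilde c_{k,l,\nu,\mu}$ with fixed cut-offs $\chi_i\in C_0^\infty$.

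Substituting this expansion into the operator, I would absorb the linear exponentials $e^{i2^{-k}\nu\cdot\xi}$ and $e^{i2^{-l}\mu\cdot\eta}$ as physical-space translations $\tau^k_\nu f=f(\cdot+c\,2^{-k}\nu)$, $\tau^l_\mu g=g(\cdot+c\,2^{-l}\mu)$ of the inputs, so that the $(k,l,\nu,\mu)$-term of $T^{\phi_1,\phi_2}_a$ equals
\[
\chi_0(x,t)\,\widetilde c_{k,l,\nu,\mu}(x,t)\cdot\bigl(T^{\phi_1}_{\chi_1\widetilde\psi_k}\tau^k_\nu f\bigr)(x,t)\cdot\bigl(T^{\phi_2}_{\chi_2\widetilde\psi_l}\tau^l_\mu g\bigr)(x,t),
\]
a product of two linear Fourier integral operators carrying the original phases $\phi_1,\phi_2$---which satisfy the cinematic curvature condition of Definition~\ref{CCC}---and symbols of order $0$ lying in a fixed bounded subset of $S^0_{1,0}$. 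Applying H\"older's inequality to each term (which pulls out $\chi_0\widetilde c_{k,l,\nu,\mu}$ in $L^\infty$, of size $\lesssim_N 2^{\max(k,l)m}(1+|\nu|+|\mu|)^{-N}$, and splits the product into factors measured in $L^{p_1}$ and $L^{p_2}$), then Conjecture~\ref{LSC} to each factor with a parameter $\sigma_i'\in(\sigma_i,\tfrac1{p_i})$---so that, using order $0$ and the localisation at frequency $2^k$, $\|T^{\phi_1}_{\chi_1\widetilde\psi_k}h\|_{L^{p_1}(\R^d\times\mathcal I)}\lesssim 2^{k[(d-1)(\frac12-\frac1{p_1})-\sigma_1']}\|\bar P_k h\|_{L^{p_1}}$ for a suitably fattened projection $\bar P_k$, together with $\|\bar P_k\tau^k_\nu f\|_{L^{p_1}}=\|\bar P_k f\|_{L^{p_1}}$ by translation invariance---and finally summing the absolutely convergent series in $\nu,\mu$, I obtain
\[
\|T^{\phi_1,\phi_2}_{a_{k,l}}(f,g)\|_{L^p(\R^d\times\mathcal I)}\lesssim 2^{\max(k,l)m}\,2^{k[(d-1)(\frac12-\frac1{p_1})-\sigma_1']}\,2^{l[(d-1)(\frac12-\frac1{p_2})-\sigma_2']}\,\|\bar P_k f\|_{L^{p_1}}\,\|\bar P_l g\|_{L^{p_2}}.
\]
The pieces with $k=0$ or $l=0$, where $\phi_j$ need not be smooth at the origin, are elementary: on them the relevant factor acts only on a band-limited function, and Bernstein's inequality bounds it directly by $\|f\|_{L^{p_1}_{s_1}}$, respectively $\|g\|_{L^{p_2}_{s_2}}$.

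It then remains to sum over $k,l$. Writing $\|\bar P_k f\|_{L^{p_1}}\sim2^{-ks_1}\|\bar P_k f\|_{L^{p_1}_{s_1}}$ and similarly for $g$, recalling that $s_i=m_i+(d-1)(\tfrac12-\tfrac1{p_i})-\sigma_i$ as in \eqref{p:p1:p2}, and splitting the double sum according to $k\ge l$ and $l>k$, the exponents collapse. In the range $k\ge l$, where $\max(k,l)=k$, the summand becomes $2^{(k-l)m_2}\,2^{-k\delta_1}\,2^{-l\delta_2}\,\|\bar P_k f\|_{L^{p_1}_{s_1}}\|\bar P_l g\|_{L^{p_2}_{s_2}}$, with $\delta_i=\sigma_i'-\sigma_i>0$; since $m_2<0$ and $k\ge l$ one has $2^{(k-l)m_2}\le1$, and since the Littlewood--Paley projections are uniformly bounded on $L^{p_i}_{s_i}$ one has $\|\bar P_k f\|_{L^{p_1}_{s_1}}\lesssim\|f\|_{L^{p_1}_{s_1}}$, so this part of the sum is $\lesssim\bigl(\sum_{k}2^{-k\delta_1}\bigr)\bigl(\sum_{l}2^{-l\delta_2}\bigr)\|f\|_{L^{p_1}_{s_1}}\|g\|_{L^{p_2}_{s_2}}\lesssim\|f\|_{L^{p_1}_{s_1}}\|g\|_{L^{p_2}_{s_2}}$. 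The range $l>k$ is treated symmetrically, this time using $m_1\le0$. Adding the two contributions and recalling that $\|T^{\phi_1,\phi_2}_a(f,g)\|_{L^p(\R^d_x\times\R_t)}=\|T^{\phi_1,\phi_2}_a(f,g)\|_{L^p(K\times\mathcal I)}$ gives the inequality of Conjecture~\ref{Conjecture:bilinear}.

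The main obstacle is the separation-of-variables construction above: carrying it out with all constants \emph{uniform in the dyadic and Fourier indices}. The two points needing care are, first, recognising the oscillatory factors $e^{i2^{-k}\nu\cdot\xi}$ as translations in physical space---so that they cost nothing in $L^{p_i}$ and their polynomial growth in $\nu$ is absorbed by the rapid decay of $c_{k,l,\nu,\mu}$---and, second, checking that the auxiliary symbols $\chi_i\widetilde\psi_k$ remain in a fixed bounded subset of $S^0_{1,0}$, so that the implicit constant in Conjecture~\ref{LSC} is independent of $k$. The final double summation is then routine; its only delicate feature is that the diagonal $k=l$ is scaling-critical (the powers of $2^k$ cancel exactly), which is exactly why the margin $\sigma_i<\tfrac1{p_i}$ must be spent to make the series converge, and why the sign conditions $m_1\le0$ and $m_2<0$ from \eqref{p:p1:p2} are each used---one in each of the two off-diagonal regimes.
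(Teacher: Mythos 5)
Your proof is correct, but it takes a genuinely different route from the paper. The paper splits $T^{\phi_1,\phi_2}_a$ into low- and high-frequency parts, treats the low frequencies by a Fourier-inversion argument that interprets the bilinear operator as a linear FIO with an operator-valued symbol, and handles the high frequencies via the \emph{continuous} dyadic decomposition of Rodríguez-López, Rule and Staubach: this involves the auxiliary multipliers $Q_t^u,P_t^v$, approximate factorisations with error operators $E_t^1,E_t^2$, dualisation against $h\in L^{p'}$, commutator bounds $[\widetilde P_t,M_m]$, a square-function/maximal-function splitting of the resulting $\int_0^1\cdots\frac{dt}{t}$ integral, and Bourgain's $L^2$ maximal estimate (Theorem \ref{Bourgain:l2:estimate}) to control $\sup_{0<t<1}|R_tP_t^{v,2}G|$ at $L^{p_2}$ after interpolation with $L^\infty$. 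You instead run a \emph{discrete} double Littlewood-Paley decomposition in $(\xi,\eta)$, expand each dyadic piece in a Fourier series on a rescaled box to separate the $\xi$- and $\eta$-dependence into a rapidly convergent sum of products of two linear FIOs (with the oscillatory factors absorbed as physical-space translations of $f,g$), apply the linear conjecture with margin $\sigma_i'\in(\sigma_i,1/p_i)$ factor by factor, and sum the resulting geometric series directly. The key mechanisms are the same in spirit — separate the two frequency variables, exploit that the phases $\phi_1,\phi_2$ (which carry the cinematic curvature) survive the decomposition, and spend the gap $\sigma_i<1/p_i$ to sum — but your argument bypasses both the Rodríguez-López machinery and Bourgain's maximal theorem, replacing them with translation invariance, the uniformity of $\chi_i\widetilde\psi_k$ in $S^0_{1,0}$, and the geometric decay $2^{(k-l)m_2}\le1$ (resp. $2^{(l-k)m_1}\le1$) on the two off-diagonal regimes; you correctly identify that exactly one of the sign hypotheses $m_1\le0$, $m_2<0$ is consumed in each regime and that the scaling-critical diagonal $k=l$ is saved by $\delta_i=\sigma_i'-\sigma_i>0$. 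The trade-off is that the paper's method reuses a published structural representation of bilinear FIOs designed with endpoint ($H^1$, BMO) estimates in mind, whereas your more elementary route is tailored to the strictly-interior range $p_i\ge\overline p_d>2$ (so no endpoint interpolation is attempted) and, in exchange, needs no square-function or maximal-function input at all. Two points you flag yourself deserve to be spelled out in a full write-up: the uniform membership of $\chi_i\widetilde\psi_k$ in a bounded subset of $S^0_{1,0}$ (so the constant from Conjecture \ref{LSC} is $k$-independent), and the $k=0$ or $l=0$ pieces where the phase need not be smooth at $\xi=0$; your Bernstein appeal there is plausible but should be accompanied by the observation that the paper's own low-frequency lemma (Subsection \ref{low:frequencies}) already furnishes exactly the needed bound.
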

The proof of Theorem \ref{main:theorem} will be deduced from a more qualitative version of it, see Theorem \ref{main:theorem:2}, which in particular will prove that any progress in the linear smoothing conjecture for FIOs implies a counterpart for bilinear FIOs in any dimension $d\geq 2.$
 
 The local smoothing conjecture has been verified in dimension $d=2$ by Guth, Wang, and Zhang \cite{GuthWangZhang} for solutions of the wave equation. Moreover, since the linear smoothing conjecture for FIOs has been proved in dimension $d=2$ by Gao, Liu, Miao, and Xi \cite{GaoLiuMiaoXi2023}, and for odd $d$ by Beltran, Hickman, and Sogge \cite{BeltranHickmanSogge}, their results, combined with Theorem \ref{main:theorem}, imply:
  
\begin{corollary}\label{corollary:d:2} For $d=2$ the bilinear local smoothing conjecture holds. In other words, for any $p_1,p_2\geq 4,$  let $\sigma_1<\frac{1}{p_1},\,\sigma_2<\frac{1}{p_2}.$ Let $a\in  S^m_{1,0}$ be a symbol of  order $m<0,$ being compactly supported in $(x,t)\in \mathbb{R}^2\times \mathbb{R}.$ Then 
 \begin{equation}  \Vert T^{\phi_1,\phi_2}_a(f,g) \Vert_{L^{p}(\mathbb{R}^2_x\times \mathbb{R}_t )}\lesssim \Vert f\Vert_{L^{p_1}_{s_1}(\mathbb{R}^2)} \Vert g\Vert_{L^{p_2}_{s_2}(\mathbb{R}^2)},
 \end{equation} where
 \begin{equation}
\frac{1}{p}=\frac{1}{p_1}+\frac{1}{p_2},\quad   s_i=m_i+\frac{1}{2}-\frac{1}{p_i}-\sigma_i,\quad m=m_1+m_2,\quad m_1\leq0, m_2<0.
 \end{equation}On the other hand, if $d$ is odd, the bilinear local smoothing conjecture holds. This means that for any $$p_1,p_2\geq \overline{p}_d:= \frac{2(d+1)}{d-1},$$  and for every $\sigma_1<\frac{1}{p_1},\,\sigma_2<\frac{1}{p_2},$ an operator $T^{\phi_1,\phi_2}_a$ having  order $m<0,$ with  non-degenerate phase functions satisfying the {\it cinematic curvature condition}, satisfies 
 \begin{equation}  \Vert T^{\phi_1,\phi_2}_a(f,g) \Vert_{L^{p}(\mathbb{R}^d_x\times \mathbb{R}_t )}\lesssim \Vert f\Vert_{L^{p_1}_{s_1}(\mathbb{R}^d)} \Vert g\Vert_{L^{p_2}_{s_2}(\mathbb{R}^d)},
 \end{equation} where
 \begin{equation}
\frac{1}{p}=\frac{1}{p_1}+\frac{1}{p_2},\quad   s_i=m_i+(d-1)\left(\frac{1}{2}-\frac{1}{p_i}\right)-\sigma_i,\quad m=m_1+m_2,\quad m_1\leq0, m_2<0.
 \end{equation}     
\end{corollary}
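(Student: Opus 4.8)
The plan is to read Corollary~\ref{corollary:d:2} directly off Theorem~\ref{main:theorem}. The linear smoothing conjecture for Fourier integral operators, Conjecture~\ref{LSC}, is known unconditionally in dimension $d=2$ by Gao, Liu, Miao and Xi~\cite{GaoLiuMiaoXi2023} (with critical index $\overline{p}_2=4$) and in every odd dimension $d$ by Beltran, Hickman and Sogge~\cite{BeltranHickmanSogge} (with critical index $\overline{p}_d=\tfrac{2(d+1)}{d-1}$). Since Theorem~\ref{main:theorem} states that Conjecture~\ref{LSC} implies Conjecture~\ref{Conjecture:bilinear}, feeding either of these inputs into Theorem~\ref{main:theorem} yields exactly the bilinear estimates claimed, with $s_i=m_i+(d-1)(\tfrac12-\tfrac1{p_i})-\sigma_i$, $\sigma_i<1/p_i$ and $\tfrac{1}{p}=\tfrac{1}{p_1}+\tfrac{1}{p_2}$. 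Thus all the content lies in the implication ``linear $\Rightarrow$ bilinear'' of Theorem~\ref{main:theorem} (and its quantitative form Theorem~\ref{main:theorem:2}), which I would prove as follows.

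First I would perform a Littlewood--Paley decomposition $f=\sum_{j\ge0}f_j$, $g=\sum_{k\ge0}g_k$ (with $|\xi|\sim2^j$, $|\eta|\sim2^k$ for $j,k\ge1$, and $|\xi|,|\eta|\lesssim1$ for $j=k=0$) and split
\begin{equation*}
T^{\phi_1,\phi_2}_a(f,g)=T^{\phi_1,\phi_2}_a(f_0,g)+T^{\phi_1,\phi_2}_a\Big(\textstyle\sum_{j\ge1}f_j,\;g_0\Big)+\sum_{j,k\ge1}T^{\phi_1,\phi_2}_{a_{j,k}}(f_j,g_k).
\end{equation*}
For the double sum, rescale $\xi=2^j\xi'$, $\eta=2^k\eta'$ so that $a_{j,k}(x,t,2^j\xi',2^k\eta')$ sits in a fixed compact set in all variables, multiply by a fixed cutoff $\psi_1(x)\psi_2(t)$ equal to $1$ on the $(x,t)$-support of $a$, and expand in a Fourier series on a fixed box. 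This exhibits $a_{j,k}$ as a rapidly convergent sum, with coefficients $|c^{j,k}_\nu|\lesssim_N(1+|\nu|)^{-N}$ uniformly in $j,k$, of $(2^j+2^k)^m c^{j,k}_\nu$ times a product of unimodular Fourier modes, one in $(x,t)$, one in $\xi'$, and one in $\eta'$; each such term is a \emph{product} symbol $a^{(1)}(x,t,\xi)\,a^{(2)}(x,t,\eta)$: the $(x,t)$-mode factors out of the whole bilinear operator with modulus one, the $\xi'$- and $\eta'$-modes amount to spatial translations of $f_j$ and $g_k$ (harmless, since the $L^{p_i}_{s_i}$-norms are translation invariant), and the factor $\psi_1\otimes\psi_2$ keeps the first amplitude compactly supported in $(x,t)$, which is what licenses the use of Conjecture~\ref{LSC}. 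For a product symbol one has the pointwise identity $T^{\phi_1,\phi_2}_{a^{(1)}a^{(2)}}(f,g)=(T^{\phi_1}_{a^{(1)}}f)\cdot(T^{\phi_2}_{a^{(2)}}g)$, so Hölder on $\mathbb{R}^d\times\mathcal{I}$ with $\tfrac{1}{p}=\tfrac{1}{p_1}+\tfrac{1}{p_2}$ reduces everything to the two linear factors; applying Conjecture~\ref{LSC} to each with a smoothing parameter $\sigma_i'\in(\sigma_i,1/p_i)$ and using $\|f_j\|_{L^{p_1}_{s}}\sim2^{js}\|f_j\|_{L^{p_1}}$ gives, for $j,k\ge1$,
\begin{equation*}
\big\|T^{\phi_1,\phi_2}_{a_{j,k}}(f_j,g_k)\big\|_{L^p(\mathbb{R}^d\times\mathcal{I})}\lesssim(2^j+2^k)^m\,2^{-jm_1}2^{-km_2}\,2^{-j(\sigma_1'-\sigma_1)}2^{-k(\sigma_2'-\sigma_2)}\,\|f\|_{L^{p_1}_{s_1}}\|g\|_{L^{p_2}_{s_2}}.
\end{equation*}
Since $m=m_1+m_2$ with $m_1\le0$ and $m_2<0$, one checks (splitting into $j\ge k$ and $k>j$) that $(2^j+2^k)^m2^{-jm_1}2^{-km_2}\lesssim1$ for all $j,k\ge1$; as $\sigma_i'-\sigma_i>0$, the resulting double geometric series converges and this part is $\lesssim\|f\|_{L^{p_1}_{s_1}}\|g\|_{L^{p_2}_{s_2}}$. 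Here the hypothesis $p_1,p_2\ge\overline{p}_d$ is used precisely to legitimise the two applications of Conjecture~\ref{LSC}, and only polynomial-in-$\nu$ growth of the amplitude seminorms occurs, absorbed by the decay of $c^{j,k}_\nu$.

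For the low-frequency terms, when $g_0$ is band-limited I would write $T^{\phi_1,\phi_2}_a(f,g_0)=T^{\phi_1}_c f$ with $c(x,t,\xi)=\int e^{2\pi i\phi_2(x,t,\eta)}a(x,t,\xi,\eta)\widehat{g_0}(\eta)\,d\eta$, and use elementary kernel estimates for the bounded-frequency $\eta$-integral to see that $c\in S^m_{1,0}$, compactly supported in $(x,t)$, with seminorms $\lesssim\|g\|_{L^{p_2}_{s_2}}$. Since $T^{\phi_1}_c f$ is supported on the compact set $K\times\mathcal{I}$, Hölder in $(x,t)$ passes from $L^p$ to $L^{p_1}$, and then Conjecture~\ref{LSC} at exponent $p_1$ (with $\sigma_1'\in(\sigma_1,1/p_1)$) bounds it by $\|g\|_{L^{p_2}_{s_2}}\|f\|_{L^{p_1}_{\tilde s_1}}$ with $\tilde s_1=m+(d-1)(\tfrac12-\tfrac1{p_1})-\sigma_1'$; since $m_2<0$ and $\sigma_1'>\sigma_1$ one has $\tilde s_1\le s_1$, so this is $\lesssim\|f\|_{L^{p_1}_{s_1}}\|g\|_{L^{p_2}_{s_2}}$. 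The term with $f_0$ is handled symmetrically (now using $m_1\le0$ and $\sigma_2'>\sigma_2$). Adding the three contributions proves Theorem~\ref{main:theorem}, and keeping $\sigma_i'$ explicit throughout yields the quantitative Theorem~\ref{main:theorem:2}, hence the corollary.

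The hard part will not be any single inequality but the uniformity bookkeeping: one must verify that after the rescalings and the Fourier expansion the product amplitudes retain compact $(x,t)$-support and that all FIO seminorms, the time interval, and the implicit constant in Conjecture~\ref{LSC} stay uniform in the dyadic parameters $j,k$ (this is exactly where the normalisation $\|T^{\phi_1,\phi_2}_a(f,g)\|_{L^p(\mathbb{R}^d\times\mathbb{R})}=\|T^{\phi_1,\phi_2}_a(f,g)\|_{L^p(K\times\mathcal{I})}$ from the introduction enters), and one must control with no loss the interactions between a high-frequency input in one slot and a low-frequency input in the other, where the phases $\phi_j$ degenerate near the origin. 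It is for this last point that a scale-uniform maximal-function estimate---Bourgain's theorem from~\cite{Bourgain1985}---gives the cleanest control of the low-frequency factor, and I expect the author's argument for Theorem~\ref{main:theorem:2} to be organised around such an estimate rather than the somewhat ad hoc kernel bounds sketched above.
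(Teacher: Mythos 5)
Your proof of the corollary itself coincides with the paper's: both simply quote Theorem \ref{main:theorem:2} together with the Gao--Liu--Miao--Xi ($d=2$) and Beltran--Hickman--Sogge ($d$ odd) resolutions of the linear smoothing conjecture. The substance of your proposal is, however, a sketch of a proof of the implication ``linear $\Rightarrow$ bilinear'' (i.e.\ of Theorem \ref{main:theorem}/\ref{main:theorem:2}), and there your route is genuinely different from the paper's. The paper uses the continuous Rodr\'iguez-L\'opez--Rule--Staubach decomposition of the high-frequency part into approximate products of linear FIOs, combined with commutator estimates, a square-function bound, and Bourgain's $L^2$ maximal-function theorem. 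You instead propose a discrete Littlewood--Paley decomposition in $(\xi,\eta)$, followed by rescaling and Fourier-series expansion to reduce to \emph{exact} product symbols $a^{(1)}(x,t,\xi)a^{(2)}(x,t,\eta)$, for which the pointwise identity $T^{\phi_1,\phi_2}_{a^{(1)}a^{(2)}}(f,g)=(T^{\phi_1}_{a^{(1)}}f)(T^{\phi_2}_{a^{(2)}}g)$ plus H\"older and two applications of Conjecture \ref{LSC} do the job. For the high-high pieces this is a defensible and arguably more elementary scheme than the paper's, and your bookkeeping for the gain $(2^j+2^k)^m2^{-jm_1}2^{-km_2}\lesssim1$ under $m_1\le0$, $m_2<0$, and for the geometric summability from $\sigma_i'>\sigma_i$, is correct as sketched.

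There is, however, a genuine gap in your treatment of the low-frequency terms. You write $T^{\phi_1,\phi_2}_a(f,g_0)=T^{\phi_1}_c f$ with $c(x,t,\xi)=\int e^{2\pi i\phi_2(x,t,\eta)}a(x,t,\xi,\eta)\widehat{g_0}(\eta)\,d\eta$, and claim ``elementary kernel estimates'' give $c\in S^m_{1,0}$ with symbol seminorms $\lesssim\|g\|_{L^{p_2}_{s_2}}$. This is false. The sup-norm of $c$ (and of all its $(x,t,\xi)$-derivatives) is controlled by $\|\widehat{g_0}\|_{L^1}$, and for $p_2\ge\overline{p}_d\ge 4>2$ there is no bound of the form $\|\widehat{g_0}\|_{L^1(\{|\eta|\lesssim1\})}\lesssim\|g\|_{L^{p_2}_{s_2}(\mathbb{R}^d)}$: Bernstein's inequality for band-limited functions goes the wrong way ($\|g_0\|_{L^2}\lesssim\|g_0\|_{L^p}$ only for $p\le2$), so $\|g_0\|_{L^2}$ (hence $\|\widehat{g_0}\|_{L^1}$ after Cauchy--Schwarz on the compact $\eta$-support) can be arbitrarily large compared to $\|g\|_{L^{p_2}}$. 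If one tries to feed $c$ into Conjecture \ref{LSC} as a symbol, the implied constant there depends on $L^\infty_{x,t}$-type seminorms of $c$, and those are not controlled by $\|g\|_{L^{p_2}_{s_2}}$. The paper avoids precisely this obstruction: rather than estimating $c$ in $L^\infty$, it uses the Fourier-inversion identity
\begin{equation*}
T^{\phi_1}_{\mathfrak{a}_g}f(x,t)=C_d\smallint\widehat{\mathfrak{a}}_g(x,t,\lambda)\,T^{\phi_1}_{\chi\zeta}\bigl(f(\cdot-\lambda)\bigr)(x,t)\,d\lambda,
\end{equation*}
applies H\"older in $\lambda$, and crucially keeps the $g$-dependence as the $L^{p_2}(\mathbb{R}^d_x\times\mathbb{R}_t)$-norm of $\widehat{\mathfrak{a}}_g(\cdot,\cdot,\lambda)=T^{\phi_2}_{\varkappa(\cdot)}g$, which \emph{is} controlled by $\|g\|_{L^{p_2}_{s_2}}$ via a second application of (LLSP). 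Your low-frequency step needs to be replaced by some such superposition-plus-H\"older argument, or by a duality argument; as stated, the claimed bound on the seminorms of $c$ does not hold.
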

There is no sense of thinking about a calculus of bilinear FIOs, however, each bilinear FIO has encoded its own {\it calculus} of linear FIOs when investigating its inner structure \cite{Rodriguez:Lopez}.
We will adopt in our further analysis the structural properties introduced by Rodríguez-López, Rule, and  Staubach in \cite{Rodriguez:Lopez} for bilinear Fourier integral operators. These structural properties serve as a substitute for the calculus of Fourier integral operators due to H\"ormander \cite{Hormander1971Ac} in this setting. A natural question is to what extent a bilinear FIO is approximately the composition, (or the product) of two linear FIOs. Indeed, unless the bilinear symbol of an operator has compact support in one of the two frequency variables $(\xi,\eta)$ a bilinear FIO cannot be written as the {\it composition} of two linear FIOs.  Their method consists of making a careful continuous decomposition of the high frequencies of a bilinear FIO $T^{\phi_1,\phi_2}_{a}.$ Roughly explaining, it expresses the high frequency portion of the operator as a  sum of two operators of the form (see (46) of \cite[Page 18]{Rodriguez:Lopez})
\begin{equation}
    \tilde T^{\phi_1,\phi_2}_{\tilde a}(\mu(D)f,\mu(D)g)(x,s)=\smallint\limits_0^1\smallint\smallint T^{\phi_1}_{\nu_1^{t,u}}f(x,s) T^{\phi_2}_{\mu_1^{t,v}}g(x,s)\frac{m(t,x,s,U)}{(1+|U|^2)^N}du dv \frac{dt}{t},
\end{equation} where $|U|^2=|u|^2+|v|^2.$
The operators  $T^{\phi_1}_{\nu_1^{t,u}} T^{\phi_2}_{\mu_1^{t,v}}$ are approximations of linear Fourier
integral operators. In particular, the authors in \cite{Rodriguez:Lopez} have used this representation to establish a bilinear extension of the celebrated Seeger-Sogge-Stein theorem \cite{SSS}. 

In view of the linear smoothing property for FIOs proved in dimension $d=2$ by  Gao, Liu, Miao, and Xi \cite{GaoLiuMiaoXi2023}, it is of interest for us to ask whether or not the methods in \cite{Rodriguez:Lopez} can be applied to smoothing estimates in the bilinear context considering the heavy machinery used in their analysis from various parts of harmonic analysis; see, for instance the discussion in \cite[Page 3]{Rodriguez:Lopez}.

One of the main differences of bilinear smoothing estimates compared with the analysis for time-dependent boundedness theorems as in \cite{Rodriguez:Lopez,SSS}, came from the fact that we cannot make use of end-point estimates  at infinity involving the BMO space, or at $p=1,$ and then involving the Hardy space $H^1.$ In our case for the range of values $p_1,p_2\geq \overline{p}_d,$ we have to deal entirely with the $L^{p_1}_{s_1}\times L^{p_2}_{s_2}\rightarrow L^p$-estimate. Making use of square-maximal functions as in  \cite{Rodriguez:Lopez} we deal with the complexity of working with the direct $L^{p_1}_{s_1}\times L^{p_2}_{s_2}\rightarrow L^p$-boundedness theorem by applying in a certain portion of the continuous decomposition of the operator, a well known theorem for estimations of maximal functions, due to Bourgain \cite{Bourgain1985,Bourgain1986}, see Theorem \ref{Bourgain:l2:estimate}; in comparison with the previous methods in the literature this is the main novelty of this paper.

This paper is organised as follows. In Section \ref{Main:results} we present our main result (see Theorem \ref{main:theorem:2}) about smoothing estimates for bilinear FIOs. Section \ref{preliminaries} is dedicated to providing preliminaries about the inner structure of these operators. Section \ref{Proof:Section} is dedicated to the proof of Theorem \ref{main:theorem:2}.

\section{Smoothing Estimates for Bilinear FIOs}\label{Main:results}

We formulate our main result in Theorem \ref{main:theorem:2} presented below. First, we  introduce two necessary definitions. Throughout, we denote by $\mathscr{S}(\mathbb{R}^d)$ the Schwartz class on $\mathbb{R}^d.$ 

\begin{definition}
    Let $d\geq 2,$  and let $2\leq p^*\leq \infty.$ Let $T_a^\phi$ be a Fourier integral operator of order $m\in \mathbb{R},$ and with non-degenerate phase function satisfying the {\it cinematic curvature condition}, see Definition \ref{CCC}. If for a fixed Lebesgue index $p,$ with $p\geq p^*,$ there exists $s=s(p,m,d)$ such that 
\begin{equation}
 \forall f\in C^\infty_0(\mathbb{R}^d),\,   \Vert T_a^\phi f \Vert_{L^p(\mathbb{R}^d\times \mathcal{I})}\lesssim \Vert f\Vert_{L^p_{s}(\mathbb{R}^d)},
\end{equation} we say that $T$ satisfies the linear {\it local smoothing property} (LLSP) with parameters $(p^*,p,s).$
\end{definition}
\begin{remark}\label{remark:linear} Note that the local smoothing conjecture claims that any Fourier integral operator of order $m\in \mathbb{R}$ with phase satisfying the {\it cinematic curvature condition} should hold the {\it local smoothing property} (LLSP) with parameters $(\overline{p}_d,p,s),$  where
$$s=m+(d-1)\left(\frac{1}{2}-\frac{1}{p_i}\right)-\sigma,$$ and $\sigma<\frac{1}{p}.$
\end{remark}
\begin{definition}
    Let $d\geq 2,$  and let $2\leq p^*\leq \infty.$ Let $T_a^{\phi_1,\phi_2}$ be a bilinear Fourier integral operator of order $m\in \mathbb{R},$ and with non-degenerate phase functions $\phi_1,\phi_2,$ satisfying the {\it cinematic curvature condition}. If for a fixed pair $(p_1,p_2),$ with  $p_1,p_2\geq p^*,$ there exist $s_1=s(p_1,m,d)$ and $s_2=s(p_2,m,d)$  such that 
\begin{equation}
    \forall f,g\in C^\infty_0(\mathbb{R}^d),\,  \Vert T_a^{\phi_1,\phi_2}f \Vert_{L^p(\mathbb{R}^d\times \mathcal{I})}\lesssim \Vert f\Vert_{L^{p_1}_{s_1}(\mathbb{R}^d)} \Vert g\Vert_{L^{p_2}_{s_2}(\mathbb{R}^d)},
\end{equation}
with $\frac{1}{p}=\frac{1}{p_1}+\frac{1}{p_2},$
we say that $T_a^{\phi_1,\phi_2}$ satisfies the bilinear local smoothing property (BLSP) with parameters $(p^*,p, p_1,p_2,s_1,s_2).$
\end{definition}
\begin{remark}\label{remark:bilinear} Note that the bilinear local smoothing conjecture claims that any Fourier integral operator of order $m=m_1+m_2<0,$  $m_2<0,$ with phases satisfying the {\it cinematic curvature condition} should hold the {\it bilinear local smoothing property} with parameters $(\overline{p}_d,p,p_1,p_2,s_1,s_2),$  where
$$s_i=m_i+(d-1)\left(\frac{1}{2}-\frac{1}{p_i}\right)-\sigma_i,$$ where $\sigma_i$ is any number satisfying $\sigma_i<\frac{1}{p_i}.$    
\end{remark}

\begin{remark}\label{Rem:linear:conj:m}
    Note that there is no restriction in assuming that $m<0$ in Conjecture \ref{LSC}. Indeed, if $m_0\geq 0$ is the order of a Fourier Integral Operator $T^\phi_a,$ one can take $s_0$ such that $m=m_0-s_0<0,$ and to replace $f$ by $f_{s_0}=(1-\Delta)^{\frac{s_0}{2}}$ to get for the FIO $ \widetilde{T}=T(1-\Delta)^{-\frac{s_0}{2}}$ the smoothing property
\begin{equation*}
     \Vert \widetilde{T}f_{s_0} \Vert_{L^p(\mathbb{R}^d\times \mathcal{I})}\lesssim \Vert (1-\Delta)^{\frac{s_0}{2}} f\Vert_{L^p_{\tilde s}(\mathbb{R}^d)},\, \tilde s=m+(d-1)\left(\frac{1}{2}-\frac{1}{p_i}\right)-\sigma.
\end{equation*}The previous estimate is equivalent to the following one
    \begin{equation*}
     \Vert T f \Vert_{L^p(\mathbb{R}^d\times \mathcal{I})}\lesssim \Vert f\Vert_{L^p_{\tilde s+s_0}(\mathbb{R}^d)},\, \tilde s+s_0=m_0+(d-1)\left(\frac{1}{2}-\frac{1}{p_i}\right)-\sigma.
\end{equation*} This fact and our evidence justify our choice of assuming that the order $m$ is negative in Conjecture \ref{Conjecture:bilinear}. 
Moreover, according to our methodology, we decompose the operator $T^{\phi_1,\phi_2}_a$ as follows
 \begin{align*}  T^{\phi_1,\phi_2}_a(f,g) &= T^{\phi_1,\phi_2}_a(\mu(D)f,\mu(D)g)\\
 &+ T^{\phi_1,\phi_2}_a(\mu(D)f,(1-\mu)(D)g)+ T^{\phi_1,\phi_2}_a((1-\mu(D))f,g).
 \end{align*} where $\mu$ is supported outside of a ball centred at the origin.  The second and the third terms are the low-frequency portions of the operator.  Curiously, for the analyisis of the low-frequencies we need the hypothesis $m_1\leq 0,$ while for the analysis of the high frequencies we need to assume that $m_2<0.$  
\end{remark}
\begin{theorem}\label{main:theorem:2}
    Let $d\geq 2,$ and let $2\leq p^*\leq \infty.$ Let us consider the following hypothesis:
    \begin{itemize}
        \item[$(H1)$]  Assume that a pair of real numbers $(m_1,m_2),$ and a pair of Lebesgue indices $(p_1,p_2),$ are such that $p_i\geq p^*,$ and satisfying that any Fourier integral operator of order $ m_i$ satisfying the cinematic curvature condition, holds (LLSP) with parameters $(p^*,p_i,s_i)$ for some $s_i=s_i(p_i, m_i,d).$
    \end{itemize}
Then, if  $m:=m_1+m_2,$ $m_1\leq 0,$ and $m_2<0,$  every bilinear Fourier integral operator $T^{\phi_1,\phi_2}_a$ of order $m$ with  non-degenerate phase functions satisfying the {\it cinematic curvature condition,} satisfies (BLSP) with parameters $(p^*,p,p_1,p_2,s_1,s_2).$ In other words, we have that 
\begin{equation}
    \Vert T_a^{\phi_1,\phi_2}f \Vert_{L^p(\mathbb{R}^d\times \mathcal{I})}\lesssim \Vert f\Vert_{L^{p_1}_{s_1}(\mathbb{R}^d)} \Vert g\Vert_{L^{p_2}_{s_2}(\mathbb{R}^d)},
\end{equation}
where $\frac{1}{p}=\frac{1}{p_1}+\frac{1}{p_2},$ $1<p<\infty.$     
\end{theorem}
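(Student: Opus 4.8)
The plan is to split $T_a^{\phi_1,\phi_2}(f,g)$ into a high--high frequency piece and two low--frequency pieces, to reduce the low--frequency pieces to the linear hypothesis $(H1)$ applied to auxiliary \emph{linear} Fourier integral operators, and to treat the high--high piece by combining the continuous structural decomposition of Rodr\'iguez-L\'opez, Rule and Staubach \cite{Rodriguez:Lopez} with $(H1)$ and Bourgain's maximal theorem (Theorem~\ref{Bourgain:l2:estimate}). Throughout one uses the quantitative form of $(H1)$, in which the implicit constant depends only on finitely many symbol seminorms, so that it applies uniformly over bounded families of symbols. Since $a$ is compactly supported in $(x,t)$ the left--hand norm is taken over the fixed set $K\times\mathcal I$. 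Fix $\mu\in C^\infty(\mathbb{R}^d)$ with $\mu\equiv1$ for $|\xi|\ge1$ and $\mu\equiv0$ near the origin, and write $T_a^{\phi_1,\phi_2}(f,g)=T_a^{\phi_1,\phi_2}(\mu(D)f,\mu(D)g)+T_a^{\phi_1,\phi_2}(\mu(D)f,(1-\mu)(D)g)+T_a^{\phi_1,\phi_2}((1-\mu(D))f,g)$. Because $(1-\mu)(D)$ projects onto a fixed ball, $\|(1-\mu)(D)h\|_{L^q}\lesssim\|h\|_{L^q_r}$ for every $r\in\mathbb{R}$, so the sign of the $s_i$ is immaterial in the low--frequency analysis.

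\textbf{Low--frequency pieces.} In $T_a^{\phi_1,\phi_2}((1-\mu(D))f,g)$ I would freeze $f$: this is a linear Fourier integral operator in $g$ with phase $\phi_2$ and amplitude $c(x,t,\eta)=\smallint_{\mathbb{R}^d}e^{2\pi i\phi_1(x,t,\xi)}a(x,t,\xi,\eta)(1-\mu(\xi))\widehat f(\xi)\,d\xi$; since the $\xi$--support is bounded the associated spatial kernel decays fast and one checks that $c\in S^m_{1,0}(\mathbb{R}^d_\eta)$, compactly supported in $(x,t)$, with $\|c\|_{S^m_{1,0}}\lesssim\|(1-\mu(D))f\|_{L^{p_1}}\lesssim\|f\|_{L^{p_1}_{s_1}}$. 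Writing $m=m_1+m_2$ with $m_1\le0$, the operator $T^{\phi_2}_c(1-\Delta)^{-m_1/2}$ has amplitude in $S^{m_2}_{1,0}$ and phase $\phi_2$ obeying the cinematic curvature condition, so $(H1)$ for order $m_2$ yields $\|T^{\phi_2}_c g\|_{L^{p_2}(\mathbb{R}^d\times\mathcal I)}\lesssim\|c\|_{S^m_{1,0}}\|g\|_{L^{p_2}_{s_2+m_1}}\lesssim\|f\|_{L^{p_1}_{s_1}}\|g\|_{L^{p_2}_{s_2}}$, the last step using $m_1\le0$; H\"older on $K\times\mathcal I$ (where $p\le p_2$) upgrades $L^{p_2}$ to $L^p$. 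The term $T_a^{\phi_1,\phi_2}(\mu(D)f,(1-\mu)(D)g)$ is symmetric: freeze $g$, reduce to a linear Fourier integral operator in $f$ with phase $\phi_1$ and amplitude of order $m$, factor $(1-\Delta)^{-m_2/2}$ to reach $S^{m_1}_{1,0}$, and apply $(H1)$ for order $m_1$.

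\textbf{High--high piece.} Here I would apply the decomposition of \cite{Rodriguez:Lopez}: modulo a bilinear operator of arbitrarily negative order (handled directly), $T_a^{\phi_1,\phi_2}(\mu(D)f,\mu(D)g)$ is a finite sum of operators
\[
\mathcal T(f,g)(x,s)=\smallint_0^1\!\smallint\!\smallint T^{\phi_1}_{\nu_1^{t,u}}f(x,s)\,T^{\phi_2}_{\mu_1^{t,v}}g(x,s)\,\frac{m(t,x,s,U)}{(1+|U|^2)^N}\,du\,dv\,\frac{dt}{t},\qquad |U|^2=|u|^2+|v|^2,
\]
where, distributing the order as $m=m_1+m_2$ (with $m_1\le0$, $m_2<0$), the symbols $\nu_1^{t,u},\mu_1^{t,v}$ are compactly supported in $(x,s)$, frequency--localised at scale $t^{-1}$, and lie in bounded subsets of $S^{m_1}_{1,0}$, resp.\ $S^{m_2}_{1,0}$, uniformly in $t\in(0,1]$ up to polynomial growth in $(u,v)$ absorbed by $(1+|U|^2)^{-N}$, with $T^{\phi_1}_{\nu_1^{t,u}},T^{\phi_2}_{\mu_1^{t,v}}$ approximate Fourier integral operators carrying the cinematic curvature phases. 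The obstruction is that $\smallint_0^1 dt/t$ diverges, so Minkowski in $t$ followed by $(H1)$ is forbidden. Instead I would cut $t$ dyadically, $t\sim2^{-k}$, write $\mathcal T(f,g)=\sum_{k\ge0}\mathcal T_k(f,g)$, use the angular part of the $(u,v)$--decomposition of \cite{Rodriguez:Lopez} to make the pieces $\mathcal T_k(f,g)$ almost spatially frequency--disjoint, and use Littlewood--Paley theory (valid for $1<p<\infty$, which holds since $p\ge\overline{p}_d/2>1$) to pass to $\big\|\big(\sum_k|\mathcal T_k(f,g)|^2\big)^{1/2}\big\|_{L^p(\mathbb{R}^d_x\times\mathcal I_s)}$. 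Bounding the block $t$--integral by its length times a supremum, then Minkowski in $(u,v)$, the elementary inequality $\sum_k a_k^2b_k^2\le(\sum_k a_k^2)(\sum_k b_k^2)$, and H\"older in $(x,s)$ with $1/p=1/p_1+1/p_2$, reduces matters to the uniform square--maximal estimates $\big\|\big(\sum_{k}\sup_{t\sim2^{-k}}|T^{\phi_i}_{(\cdot)^{t,w}}f|^2\big)^{1/2}\big\|_{L^{p_i}(\mathbb{R}^d\times\mathcal I)}\lesssim\|f\|_{L^{p_i}_{s_i}}$, $i=1,2$. For one factor the supremum is discarded by a Sobolev embedding in $t$ (differentiation in $t$ preserves the symbol order), leaving a square function $\big(\smallint_0^1|T^{\phi_i}_{(\cdot)^{t,w}}f|^2\,dt/t\big)^{1/2}$ dominated by $\|f\|_{L^{p_i}_{s_i}}$ via $(H1)$ applied to the order--$m_i$ operator $\smallint_0^1\varepsilon(t)T^{\phi_i}_{(\cdot)^{t,w}}(\cdot)\,dt/t$ together with Khintchine's inequality. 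For the remaining factor the within--block supremum is genuine and cannot be removed for free; here one invokes Bourgain's maximal theorem (Theorem~\ref{Bourgain:l2:estimate}), which controls the $\ell^2$ sum over scales of these maximal operators for frequency--localised Fourier integral operators by $\|f\|_{L^{p_i}_{s_i}}$ with $s_i$ the very index furnished by $(H1)$. The rapidly convergent $(u,v)$--integrations and the finite sum over the pieces $\mathcal T$ then complete the estimate.

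\textbf{Main obstacle.} The heart of the argument is the maximal step in the high--high piece: hypothesis $(H1)$ governs only individual Fourier integral operators, whereas the supremum over the continuum $\{T^{\phi_i}_{(\cdot)^{t,w}}\}_t$ requires a genuine maximal theorem, which is exactly what Bourgain's estimate supplies — and this is why the full strength of linear local smoothing, rather than merely the Seeger--Sogge--Stein bounds, is what lets one land on the sharp indices $s_i$. A secondary difficulty, already present in \cite{Rodriguez:Lopez}, is to organise the structural decomposition so that the dyadic--in--$t$ pieces are frequency--separated enough to justify the Littlewood--Paley reassembly, a purely bilinear phenomenon (a product of high frequencies need not be of high frequency) handled through the cinematic/angular geometry.
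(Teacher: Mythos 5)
Your overall architecture (low/high split, Rodr\'iguez-L\'opez--Rule--Staubach structure, Bourgain's maximal theorem for the high piece) mirrors the paper's, but two concrete steps do not close.

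\textbf{Low frequencies.} You bound the amplitude $c(x,t,\eta)=\smallint e^{2\pi i\phi_1(x,t,\xi)}a(x,t,\xi,\eta)(1-\mu(\xi))\widehat f(\xi)\,d\xi$ by $\|c\|_{S^m_{1,0}}\lesssim\|(1-\mu(D))f\|_{L^{p_1}}$. This is false: the left side is an $L^\infty_{x,t,\eta}$ quantity that is essentially controlled by $\|\widehat{(1-\mu(D))f}\|_{L^1}$, and for band-limited $u$ one cannot bound $\|\widehat u\|_{L^1}$ by $\|u\|_{L^{p_1}}$ when $p_1\geq 2$ (take $\widehat u(\xi)=e^{i\lambda\xi^2}\chi(\xi)$ in one dimension: $\|\widehat u\|_{L^1}\sim 1$ while $\|u\|_{L^{p_1}}\sim\lambda^{1/p_1-1/2}\to0$). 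The paper avoids this by \emph{not} trying to control a symbol seminorm by a norm of $f$: after writing the low-frequency part as $T^{\phi_1}_{\mathfrak{a}_g}f$ with $\mathfrak a_g(x,t,\xi)=T^{\phi_2}_{\psi a}g(x,t)$, it uses Fourier inversion in the compact $\xi$-variable to obtain $T^{\phi_1}_{\mathfrak a_g}f = C_d\smallint\widehat{\mathfrak a}_g(x,t,\lambda)\,T^{\phi_1}_{\chi\zeta}\bigl(f(\cdot-\lambda)\bigr)(x,t)\,d\lambda$, applies H\"older in $(x,t)$ \emph{inside} the $\lambda$-integral, and only then invokes $(H1)$ on the $g$-factor (in $L^{p_2}$, not $L^\infty$), with the $\lambda$-integral summable via integration by parts. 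You should replace your pointwise symbol bound by such an $L^{p}$-H\"older decoupling.

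\textbf{High frequencies.} Your plan reduces to the square-maximal bound
\[
\Bigl\|\Bigl(\sum_{k}\sup_{t\sim2^{-k}}\bigl|T^{\phi_i}_{(\cdot)^{t,w}}f\bigr|^2\Bigr)^{1/2}\Bigr\|_{L^{p_i}}\lesssim\|f\|_{L^{p_i}_{s_i}},
\]
and you attribute it to Bourgain's Theorem~\ref{Bourgain:l2:estimate}. But that theorem is for \emph{convolution} maximal functions $\sup_t|f\ast K_t|$ on $L^2$; it does not bound a supremum over a continuum of Fourier integral operators with $t$-dependent variable-coefficient symbols, nor does it yield $L^{p_i}$ at the indices furnished by $(H1)$. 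The paper's key structural move, which your outline omits, is to write the $t$-dependent quantity as $A_t = R_t(P^{v,2}_tG)$ with $G=T^{\phi_2}_\gamma g$ a \emph{single} function independent of $t$; the FIO is applied once via $(H1)$ to produce $G$, and the residual $t$-dependence is a convolution multiplier on which Bourgain's $L^2$ estimate applies (after verifying $\Gamma(K)<\infty$) and is interpolated with the trivial $L^\infty$ bound to reach $L^{p_2}$. Without this factoring, the supremum and $(H1)$ cannot be combined. In addition, your dyadic-in-$t$ Littlewood--Paley reassembly step is not justified: as you yourself note, a product of frequency-localised functions is not frequency-localised, and the paper does not use such a reassembly; instead it keeps the continuous $dt/t$-integral, pairs against $h\in L^{p'}$, uses commutators $[\widetilde P_t,M_m]$, $[\widetilde Q_t,M_m]$ to gain a power of $t$ on part of the contribution, and uses Cauchy--Schwarz in $t$ together with square functions and the factored maximal bound on the remainder.
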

\begin{proof}[Proof of Theorem \ref{main:theorem}] The theorem follows from Theorem \ref{main:theorem:2} taking also into account 
    Remarks \ref{remark:linear} and \ref{remark:bilinear}.
\end{proof} 
\begin{proof}[Proof of Corollary  \ref{corollary:d:2}] The proof follows from Theorem \ref{main:theorem:2} and the solution of the linear smoothing conjecture in dimension $d=2$ due to Gao, Liu, Miao, and Xi \cite{GaoLiuMiaoXi2023}; also for $d$ odd Conjecture has been verified by Beltran, Hickman, and Sogge \cite{BeltranHickmanSogge}. For the numerology used here for this conjecture with $d$ odd we refer to \cite[Page 1926]{GaoLiuMiaoXi2023}. This fact combined with Theorem \ref{main:theorem:2} proves the statement in this case.  The proof is complete.   
\end{proof}

\section{Preliminaries}\label{preliminaries}

In this section, we present the structural properties of bilinear FIOs as introduced by Rodríguez-López, Rule, and  Staubach in \cite{Rodriguez:Lopez}. As mentioned before,  these structural properties replace the calculus of Fourier integral operators due to H\"ormander \cite{Hormander1971Ac} in this setting in the sense that we want to understand bilinear operators as twisted versions of compositions or products of linear FIOs. In other words, each bilinear FIO induces its own {\it calculus} of linear FIOs when investigating its inner structure \cite{Rodriguez:Lopez}. 

Standard notation as $\psi(D)$ or $\psi(tD)$ will be used later for the corresponding multipliers with symbols $\psi:=\psi(\eta),$ and $\psi(t\cdot):=\psi(t\eta),$ respectively.
\subsection{Structural Properties of Bilinear FIOs} First, we will review the structure of the bilinear operator $T^{\phi_1,\phi_2}_a,$ given by,
\begin{align}
     T^{\phi_1,\phi_2}_a(f,g)(x,t)=\smallint_{\mathbb{R}^d}\smallint_{\mathbb{R}^d}e^{2\pi i(\phi_1(x,t,\xi)+\phi_2(x,t,\eta)) }a(x,t,\xi,\eta)\widehat{f}(\xi)\widehat{g}(\eta)d\xi d\eta.
 \end{align} The first step will make a reduction of our analysis in low frequencies and in high frequencies. For this we follow the approach and notation due to  Rodríguez-López, Rule, and  Staubach in \cite[Sections 3-4]{Rodriguez:Lopez}. Some remarks are in order. 
\begin{remark}(c.f. \cite[Page 12]{Rodriguez:Lopez}).
    We will use the structure of the operator  $T^{\phi_1,\phi_2}_a$ to decompose it in a suitable continuous form. The phase functions $\phi_j$ are smooth, positively homogeneous of order one in the frequency variable and they are non-degenerate. By the mean value theorem
    \begin{equation}
     \forall (x,t,\xi)\in \mathcal{K}\times(\mathbb{R}^d\setminus\{0\})=K\times \mathcal{I} \times(\mathbb{R}^d\setminus\{0\}),\,   |\nabla_x\phi_j(x,t,\xi)|\lesssim |\xi|.
    \end{equation}In addition, the phases are non-degenerate and then one has the lower bound
    \begin{equation}
      \forall (x,t,\xi)\in \mathcal{K}\times(\mathbb{R}^d\setminus\{0\})=K\times \mathcal{I} \times(\mathbb{R}^d\setminus\{0\}),\,    |\xi| \lesssim  |\nabla_x\phi_j(x,t,\xi)|.
    \end{equation}Therefore, there exists $\lambda>0,$ such that on $K\times \mathcal{I} \times(\mathbb{R}^d\setminus\{0\})$ one has the estimate
    \begin{equation}
         \lambda|\xi| \leq |\nabla_x\phi_j(x,t,\xi)|\leq \frac{|\xi|}{\lambda}.
    \end{equation} We consider a smooth function $\mu$ such that $\mu\equiv 0,$ on $\{\xi\in \mathbb{R}^d:|\xi|\leq \frac{1}{4\lambda}\},$ and with $\mu\equiv 1$ on $\{\xi\in \mathbb{R}^d:|\xi|\geq \frac{1}{3\lambda}\}.$
\end{remark}
\begin{remark} We decompose the operator $T^{\phi_1,\phi_2}_a$ as follows
 \begin{align}  T^{\phi_1,\phi_2}_a(f,g) &= T^{\phi_1,\phi_2}_a(\mu(D)f,\mu(D)g)\\
 &+ T^{\phi_1,\phi_2}_a(\mu(D)f,(1-\mu)(D)g)+ T^{\phi_1,\phi_2}_a((1-\mu(D))f,g).
 \end{align} The second and the third term are the low-frequency portions of the operator.  That these parts satisfy the $L^{p_1}_{s_1}\times L^{p_2}_{s_2} \rightarrow L^p$ estimate requires an analysis based on understanding the low frequency portion of the operator as an almost composition of linear FIOs, see Subsection \ref{low:frequencies}. 
\end{remark}
\begin{remark}(c.f. \cite[Section 4]{Rodriguez:Lopez}). For the analysis of the high-frequency portion $T^{\phi_1,\phi_2}_a(\mu(D)f,\mu(D)g)$ one can introduce two smooth cut-off functions $\nu,\chi\in C^\infty_0(\mathbb{R}^d\times \mathbb{R}^d)$ such that $\chi(\xi,\eta)=1$ for $|\xi|+|\eta|\leq 1,$  $\chi(\xi,\eta)=0$ for $|\xi|+|\eta|\geq 2,$  $\nu(\xi,\eta)=0$ for $\lambda^2|\xi|\leq 16|\eta|,$ and $\nu(\xi,\eta)=1,$ for $64|\eta|\leq \lambda^2|\xi|.$ Let us define
\begin{equation}
    a_1:=(1-\chi)\nu a,\,\,a_2:=(1-\chi)(1-\nu)a.
\end{equation} We have that $a_1$ and $a_2$ are bilinear symbols of order $m,$ and 
 \begin{align}  T^{\phi_1,\phi_2}_a(\mu(D)f,\mu(D)g)=  T^{\phi_1,\phi_2}_{a_1}(\mu(D)f,\mu(D)g)+T^{\phi_1,\phi_2}_{a_2}(\mu(D)f,\mu(D)g).
 \end{align}   
 We have the representation (see (43) in \cite[Page 16]{Rodriguez:Lopez})
\begin{equation}\label{main:rep}
    T^{\phi_1,\phi_2}_{a_1}(\mu(D)f,\mu(D)g)(x,s)=\smallint\limits_0^1\smallint\smallint T^{\phi_1}_{\nu_1^{t,u}}f(x,s) T^{\phi_2}_{\mu_1^{t,v}}g(x,s)\frac{m(t,x,s,u,v)}{(1+|u|^2+|v|^2)^N}du dv \frac{dt}{t}.
\end{equation}
The operator $T^{\phi_1}_{\nu_1^{t,u}}$ has the following factorisation modulo an error term (see (54) in \cite[Page 20]{Rodriguez:Lopez}) 
\begin{equation}
    T^{\phi_1}_{\nu_1^{t,u}} =t^{-m_2} Q_t^u T_{\nu_{m_1}}^{\phi_1}+t^{-m_2}E_t^1,
\end{equation} where, 
\begin{itemize}
    \item $Q_t^u$ is a Fourier multiplier with symbol (see (54) in \cite[Page 21]{Rodriguez:Lopez}) 
    $$\xi \mapsto|t\xi|^{m_2}\widehat{\psi}(t\xi)e^{it u\cdot \xi}\in \mathscr{S}(\mathbb{R}^n).$$ The function $\psi\in \mathscr{S}(\mathbb{R}^n)$ is choosen to be smooth, whose Fourier transform is supported on the annulus $\{\xi\in \mathbb{R}^d:\frac{1}{2}\leq |\xi|\leq 2\}$ and such that 
    $$\forall t>0,\,\forall \xi\neq 0,\, \smallint\limits_0^\infty|\widehat{\psi}(t\xi)|^2\frac{dt}{t}=1.$$
    \item The operator $T_{\nu_{m_1}}^{\phi_1}$ is a Fourier integral operator of order $m_1$ with phase $\phi_1$ and with symbol (see (55) in \cite[Page 21]{Rodriguez:Lopez})
    \begin{equation}
        (x,s,\xi)\mapsto\nu_{m_1}(x,\xi)=\mu(\xi)\chi(x)|\nabla_x\phi_1(x,s,\xi)|^{m_1}\in S^{m_1}_{1,0}(\mathbb{R}^d_x\times \mathbb{R}_s\times \mathbb{R}^d_\xi).
    \end{equation}The operator $E_t^1$ has the same phase $\phi_1$ of $T_{\nu_{m_1}}^{\phi_1}$ and a symbol in the class  $S^{m_1-(\frac{1}{2}-\varepsilon)}_{1,0}(\mathbb{R}^d_x\times \mathbb{R}_s\times \mathbb{R}^d_\xi),$ with seminorms bounded by the product of $t^\varepsilon,$ $\varepsilon\in (0,\frac{1}{2}),$ with a polynomial expression in $|u|.$
\end{itemize}
On the other hand, the operator $T^{\phi_2}_{\mu_1^{t,v}}$ admits the following factorisation modulo an error term (see (56) in \cite[Page 21]{Rodriguez:Lopez})
$$t^{-m_2}T^{\phi_2}_{\mu_1^{t,v}}=P_t^vT^{\phi_2}_{\mu_1^t}+E_t^2,$$
where:
\begin{itemize}
    \item(i) the symbol of $P_t^v$ is given by 
    $$\eta\mapsto\widehat{\theta}(t\eta)e^{itv\cdot \eta} \in S^{0}_{1,0}( \mathbb{R}^d_\eta), $$
    \item(ii) and  $T^{\phi_2}_{\mu_1^t}$ is a Fourier integral operator of order $m_2<0,$ with phase function $\phi_2,$ and with symbol 
    $$(x,s,\eta)\mapsto \mu_1^t(x,s,\eta)=\mu(\eta)\chi(x)\widehat{\theta}_1(t\nabla_x\phi_2(x,s,\eta))t^{-m_2}\in S^{m_1}_{1,0}(\mathbb{R}^d_x\times \mathbb{R}_s\times \mathbb{R}^d_\eta),$$
    uniformly in $t\in (0,1].$  Here, $E_t^2$ is a Fourier integral operator with phase $\phi_2$    and a symbol in the class  $S^{m_2-(\frac{1}{2}-\varepsilon)}_{1,0}(\mathbb{R}^d_x\times \mathbb{R}_s\times \mathbb{R}^d_\eta),$ with seminorms that behaves polynomially in $|v|.$ 
\end{itemize}
\end{remark}
\subsection{Decomposition à la Coifman-Meyer} Some remarks are in order.
\begin{remark}
   The core of the proof of the structural properties for bilinear FIOs lies in  the representation in \eqref{main:rep} for $T^{\phi_1,\phi_2}_{a_1}$\footnote{A similar representation holds for $T^{\phi_1,\phi_2}_{a_2}.$ So, we will just analyse one of these two terms in the proof of our main theorem.} that is (see \cite[Page 21]{Rodriguez:Lopez}),
\begin{equation}\label{main:rep:2}
    T^{\phi_1,\phi_2}_{a_1}(\mu(D)f,\mu(D)g)(x,s)=\smallint\smallint \smallint\limits_0^1 T^{\phi_1}_{\nu_1^{t,u}}f(x,s) T^{\phi_2}_{\mu_1^{t,v}}g(x,s)\frac{m(t,x,s,u,v)}{(1+|u|^2+|v|^2)^N}du dv \frac{dt}{t}.
\end{equation}
About the $\frac{dt}{t}$-integral in \eqref{main:rep:2}, by writing $m(t,x,s):=m(t,x,s,u,v),$ one has that 
\begin{align*}
    \smallint\limits_0^1 T^{\phi_1}_{\nu_1^{t,u}}f(x,s) T^{\phi_2}_{\mu_1^{t,v}}g(x,s)m(t,x,s)\frac{dt}{t} &= I_1+I_2+I_3,
\end{align*} with
\begin{align*}
  I_1=  \smallint\limits_0^1 Q_t^u T^{\phi_1}_{\nu_{m_1}}f(x,s) P_t^v T^{\phi_2}_{\mu_1^t}g(x,s)m(t,x,s)\frac{dt}{t},
\end{align*}
\begin{align*}
  I_2=  \smallint\limits_0^1 Q_t^u T^{\phi_1}_{\nu_{m_1}} f(x,s) E_t^2g(x,s)m(t,x,s)\frac{dt}{t},
\end{align*} and 
\begin{align*}
 I_3=  \smallint\limits_0^1 E_t^1f(x,s)t^{-m_2} T^{\phi_2}_{\mu_1^{t,v}}g(x,s)m(t,x,s)\frac{dt}{t}.
\end{align*}
\end{remark}
\begin{remark}\label{splitting:I1}
    Note that for $m<0,$ and with $m_2<0$ we have the following representation (see \textbf{Case II} in \cite[Page 24]{Rodriguez:Lopez} and \cite[Page 26]{Rodriguez:Lopez})
\begin{align*}
    I_1=\sum_{j=1}^3\smallint_\kappa^\infty\smallint_0^1\chi_{(0,1)}(rt)(Q_t^u T_{\nu_{m_1} }^{\phi_1}f)(x,s)(P_t^v S_{r,t}^j g)(x,s)m(t,x,s)\frac{dt dr}{tr},
\end{align*} where $S_{r,t}^1=r^{m_2}P_{1,t}Q_{tr}T_\gamma^{\phi_2},$ with $T_\gamma^{\phi_2}$ being a Fourier integral operator of order $m_2$ with symbol
\begin{equation}  \gamma(x,s,\eta)=\chi(x)\mu(\eta)|\nabla_x\phi_2(x,s,\eta)|^{m_2}\in S^{m_2}_{1,0}(\mathbb{R}^d_x\times \mathbb{R}_s\times \mathbb{R}^d_\eta),
\end{equation} and $P_{1,t}$ is the Fourier multiplier with symbol $\widehat{\theta}_1(t\eta).$ The operators $S_{r,t}^2,S_{r,t}^3,$ have the form
\begin{align*}    S_{r,t}^2g(x,s)=s^{m_2}P_{1,t}\left(\smallint e^{i\phi_2(\cdot,s,\eta)} R^2_{tr}(\cdot,s,\eta)\widehat{g}(\eta)d\eta\right)(x),
\end{align*} and 
\begin{align*}    S_{r,t}^3g(x,s)=s^{m_2}\left(\smallint e^{i\phi_2(x,s,\eta)} R^1_{t}(x,s,\eta)\widehat{g}(\eta)d\eta\right),
\end{align*} where $R_t^1(x,s,\eta),R_t^1(x,s,\eta)\in S^{m_2-(\frac{1}{2}-\varepsilon)}_{1,0}(\mathbb{R}^d_x\times \mathbb{R}_s\times \mathbb{R}^d_\eta).$ Also, these symbols have seminorms that are bounded by $t^\varepsilon$  and by $(tr)^{\varepsilon},$ with $\varepsilon\in (0,1/2),$ respectively. 
\end{remark}
\begin{remark}\label{Pt:Qt}
    The operators $P_t^v$ and $Q_t^u,$ are uniformly $L^p(\mathbb{R}^d_x)$-bounded with operators norms growing polynomially in $|u|$ and $|v|,$ respectively; see e.g., \cite[Page 161]{SteinBook1993}.
\end{remark}
Having understood the {\it calculus} of Fourier integral operators inside a bilinear FIO we will start the analysis of low frequencies in the next section.

\section{Proof of Theorem \ref{main:theorem:2}}\label{Proof:Section}
In this section we prove our main Theorem \ref{main:theorem:2}. We split our analysis in two parts. First, we investigate the smoothing properties of the low frequencies of a bilinear FIO. This portion of the operator is approximately the composition of two linear Fourier Integral operators. As for the portion of the operator formed by the high frequencies, the idea is to express it in terms of {\it paraproducts} à la Coifman-Meyer \cite{CoifmanMeyer}.

\subsection{Analysis of the low frequencies}\label{low:frequencies}
For the analysis of low frequencies it suffices to prove that a Fourier integral operator
\begin{align}
     T^{\phi_1,\phi_2}_a(f,g)(x,t)=\smallint_{\mathbb{R}^d}\smallint_{\mathbb{R}^d}e^{2\pi i(\phi_1(x,t,\xi)+\phi_2(x,t,\eta)) }a(x,t,\xi,\eta)\widehat{f}(\xi)\widehat{g}(\eta)d\xi d\eta,
 \end{align} with a symbol $a:=a(x,t,\xi,\eta)$ compactly supported in $\xi$ or in $\eta$ satisfies the estimate
 \begin{align}\label{FIO:low:frequency}
      &\Vert T^{\phi_1,\phi_2}_{a_1}(f,g)\Vert_{L^{p}(  K\times \mathcal{I} )} 
   \lesssim \Vert f \Vert_{L^{p_1}_{s_1}(\mathbb{R}^d_x)} \Vert  g\Vert_{L^{p_2}_{s_2}(\mathbb{R}^d_x)}.
 \end{align} Here
 \begin{equation}
\frac{1}{p}=\frac{1}{p_1}+\frac{1}{p_2},\quad   s_i=m_i+(d-1)\left(\frac{1}{2}-\frac{1}{p_i}\right)-\sigma_i,\quad m=m_1+m_2,\quad m_1\leq 0,\,m_2<0.
 \end{equation} 
 Without loss of generality, let us assume that $f,g\in C^\infty_0(\mathbb{R}^n),$ and that  the symbol $a$ is compactly supported in $(x,t,\xi).$ The following remark will be useful in our further analysis.

 \begin{remark}\label{Rem}Assume that the parameters \( s_i \) are of the form
\[
s_i = m_i + \tilde{s}_i(p_i,d).
\]
Let \( N \) be an arbitrary integer.  Observe that if the symbol \( a(s,t,\xi,\eta) \) has compact support in \( \xi \), then there is no restriction in Theorem~\ref{main:theorem:2} in assuming that \( m_2 < 0 \) has arbitrarily large absolute value; in particular, we may assume that \( |m_2| > N \).  

Similarly, if \( a(s,t,\xi,\eta) \) has compact support in \( \eta \), there is no restriction in assuming that \( m_1 < 0 \) has arbitrarily large absolute value.  

To justify this claim, namely that it is enough to consider the conclusion of Theorem~\ref{main:theorem:2} when \( |m_2| \) is large, we consider the first case, where \( a(s,t,\xi,\eta) \) has compact support in \( \xi \).  

The idea is to consider \( m_2 \le 0 \) with small absolute value, and  reduce the proof to the case where \( m_2 \le 0 \) has large absolute value.

 Let $m=m_1+m_2,$ $m_2<0$ be the order of a bilinear FIO $T^{\phi_1,\phi_2}_a.$ One can take $s_0$ such that $m_2'=m_2-s_0<-N,$ and to replace $g$ by $g_{s_0}=(1-\Delta)^{\frac{s_0}{2}}g$ to get for the bilinear FIO 
    $$ \tilde T^{\phi_1,\phi_2}_a(\cdot,\cdot)=T^{\phi_1,\phi_2}_a(\cdot,(1-\Delta_x)^{-\frac{s_0}{2}}\cdot),$$ of order $m-s=(m_1-s_0)+m_2=m_1+m_2=m_1+(m_2-s_0)=m_1+m_2',$ that
 \begin{align*}
      &\Vert T^{\phi_1,\phi_2}_{a_1}(f,g)\Vert_{L^{p}(  K\times \mathcal{I} )}= \Vert \widetilde T^{\phi_1,\phi_2}_{a_1}(f,g_{s_0})\Vert_{L^{p}(  K\times \mathcal{I} )}\lesssim   \Vert f \Vert_{L^{p_1}_{s_1}(\mathbb{R}^d_x)} \Vert  g_{s_0}\Vert_{L^{p_2}_{s_2'}(\mathbb{R}^d_x)},
 \end{align*} where    
\begin{equation*}
      s_1=m_1+\tilde{s}_1(p_1,d),\, s_2'=m_2-s_0+\tilde{s}_2(p_1,d).
\end{equation*} Observe that 
\begin{equation}
    \,s_2:= s_2'+s_0=m_2+\tilde{s}_2(p_1,d)
\end{equation} does not depend on $s_0.$
Note also that in the case of the linear smoothing conjecture one has
$ 
    \tilde{s}_1(p_i,d)=(d-1)\left(\frac{1}{2}-\frac{1}{p_i}\right)-\sigma_i,\,\, \sigma_i<\frac{1}{p_i}.
$ 
It is clear that
$$\Vert f \Vert_{L^{p_1}_{s_1}(\mathbb{R}^d_x)} \Vert  g_{s_0}\Vert_{L^{p_2}_{s_2'}(\mathbb{R}^d_x)}= \Vert f \Vert_{L^{p_1}_{s_1}(\mathbb{R}^d_x)} \Vert  g\Vert_{L^{p_2}_{s_2'+s_0}(\mathbb{R}^d_x)}=\Vert f \Vert_{L^{p_1}_{s_1}(\mathbb{R}^d_x)} \Vert  g\Vert_{L^{p_2}_{s_2}(\mathbb{R}^d_x)}.$$
Observe that we have used that the order of $T^{\phi_1,\phi_2}_a(\cdot,(1-\Delta_x)^{-\frac{s_0}{2}}\cdot)$ is $m-s=m_1-s+m_2$ since its symbol is given by 
$$a(x,t,\xi,\eta)\langle\eta\rangle^{-s_0}\sim a(x,t,\xi,\eta)\langle\xi, \eta\rangle^{-s_0},$$
and is compactly supported in $\xi.$ The previous analysis shows that, under the condition of the symbol to be compactly supported in one of the two frequency variables, if the bilinear smoothing estimate holds for $m_2$ with $|m_2|$ large enough, then the same property holds with any $m_2<0.$
\end{remark}

We return our attention to the analysis of the low frequencies. For the proof of \eqref{FIO:low:frequency} let us combine the proof of the $L^2$-boundedness of pseudo-differential operators of order zero \cite[Page 234]{SteinBook1993} and the proof of Lemma 3.1 in \cite[Page 12]{Rodriguez:Lopez}.  Let $\psi$ be a smooth cutt-off function that is equal to $1$ on the support of $a$ in $\xi.$ The support of the symbol $a$ in $\xi$  will be denoted by $\mathfrak{S}.$  Then
 \begin{equation}
     T^{\phi_1,\phi_2}_{a_1}(f,g)(x,t)=\smallint e^{i\phi_1(x,t,\xi)}\mathfrak{a}_g(x,t,\xi)\widehat{f}(\xi)d\xi =T^{\phi_1}_{\mathfrak{a}_g}(f)(x,t),\, (x,t,\xi)\in K\times\mathcal{I}\times \mathfrak{S},
 \end{equation} where
 \begin{equation}
     \mathfrak{a}_g(x,t,\xi)=\smallint e^{i\phi_2(x,t,\eta)}\psi(\xi)a(x,t,\xi,\eta)\widehat{g}(\eta)d\eta=T_{ \psi(\xi)a(\cdot,\cdot,\xi,\cdot)}^{\phi_2}g(x,t).
 \end{equation} 
 Observe that the support of $\mathfrak{a}_g(x,t,\xi)$ in $(x,t)$ coincides with the support of $a$ in $(x,t)$ and then it is contained in $K\times \mathcal{I}.$ 
Using the Fourier inversion formula we have that
\begin{align*}
 T^{\phi_1}_{\mathfrak{a}_g}(f)(x,t) &=   \smallint e^{i\phi_1(x,t,\xi)}\mathfrak{a}_g(x,t,\xi)\zeta(\xi)\widehat{f}(\xi)d\xi \\
 &= C_d\smallint e^{i\phi_1(x,t,\xi)}\smallint\widehat{\mathfrak{a}}_g(x,t,\lambda)e^{i\xi\cdot \lambda}d\lambda \zeta(\xi)\widehat{f}(\xi)d\xi\\
 &= C_d \smallint  \widehat{\mathfrak{a}}_g(x,t,\lambda) \smallint e^{i\phi_1(x,t,\xi)}\zeta(\xi)\widehat{f(\cdot-\lambda)}(\xi)d\xi d\lambda \\
 &= C_d \smallint  \widehat{\mathfrak{a}}_g(x,t,\lambda) T_\zeta^{\phi_1} (f(\cdot-\lambda))(x,t) d\lambda .
\end{align*}
Note that $\widehat{\mathfrak{a}}_g(x,t,\lambda) $ has the same compact support in $x$ that ${\mathfrak{a}}_g(x,t,\lambda) .$ Let us consider a function $\chi\in C^\infty_0$ such that $\chi=1$ on $\textnormal{supp}_x(\widehat{\mathfrak{a}}_g).$ Then,
\begin{align*}
     T^{\phi_1}_{\mathfrak{a}_g}(f)(x,t) &= C_d \smallint  \widehat{\mathfrak{a}}_g(x,t,\lambda)\chi(x) T_\zeta^{\phi_1} (f(\cdot-\lambda))(x,t) d\lambda\\
     &= C_d \smallint  \widehat{\mathfrak{a}}_g(x,t,\lambda) T_{\chi\zeta}^{\phi_1} (f(\cdot-\lambda))(x,t) d\lambda.
\end{align*} H\"older inequality implies that 
\begin{align*}
    \Vert  T^{\phi_1}_{\mathfrak{a}_g}(f)(x,t) \Vert_{L^p(\mathbb{R}^d_x\times \mathbb{R}_t)} &\lesssim \smallint \Vert \widehat{\mathfrak{a}}_g(x,t,\lambda)\Vert_{L^{p_2}(\mathbb{R}^d_x\times \mathbb{R}_t)} \Vert T_{\chi\zeta}^{\phi_1} (f(\cdot-\lambda))(x,t)\Vert_{L^{p_1}(\mathbb{R}^d_x\times \mathbb{R}_t)}  d\lambda\\
    &\lesssim \smallint \Vert \widehat{\mathfrak{a}}_g(x,t,\lambda)\Vert_{L^{p_2}(\mathbb{R}^d_x\times \mathbb{R}_t)} \Vert f(\cdot-\lambda)\Vert_{L^{p_1}_{s_1}(\mathbb{R}^d_x)}  d\lambda.
\end{align*} Observe that, since $T_{\chi\zeta}^{\phi_1}$ has a symbol of order $-\infty,$  as the reviewer of this paper as remarked, the following estimate holds

\begin{align*}
     \Vert T_{\chi\zeta}^{\phi_1} (f(\cdot-\lambda))(x,t)\Vert_{L^{p_1}(\mathbb{R}^d_x\times \mathbb{R}_t)} \lesssim  \Vert f(\cdot-\lambda)\Vert_{L^{p_1}_{s_1}}=\Vert ((1-\Delta_x)^{\frac{s_1}{2}}f)(\cdot-\lambda)\Vert_{L^{p_1}}.
\end{align*}  Indeed, the operator $(1-\Delta_x)^{\frac{s_1}{2}}$ commutes with the translation operator $\tau_\lambda: f\mapsto f(\cdot-\lambda)$. Since the Lebesgue norms are also invariant by the action of the translation operator $\tau_\lambda,$ we have that
\begin{align*}
     \Vert T_{\chi\zeta}^{\phi_1} (f(\cdot-\lambda))(x,t)\Vert_{L^{p_1}(\mathbb{R}^d_x\times \mathbb{R}_t)} \lesssim \Vert f \Vert_{L^{p_1}_{s_1}}.
\end{align*}
In order to estimate the norm $\Vert \widehat{\mathfrak{a}}_g(x,t,\lambda)\Vert_{L^{p_2}(\mathbb{R}^d_x\times \mathbb{R}_t)},$ observe that 
\begin{align*}
     \widehat{\mathfrak{a}}_g(x,t,\lambda)=\smallint e^{i\lambda \xi} {\mathfrak{a}}_g(x,t,\lambda)dx=\langle\lambda\rangle^{-2N}  \smallint e^{i\lambda \xi} (I-\Delta_\xi)^{N}{\mathfrak{a}}_g(x,t,\xi)d\xi.
\end{align*}Then
\begin{align*}    
\Vert \widehat{\mathfrak{a}}_g(x,t,\lambda)\Vert_{L^{p_2}(\mathbb{R}^d_x\times \mathbb{R}_t)}\lesssim \langle\lambda\rangle^{-2N}  \smallint_\mathfrak{S}  \Vert (I-\Delta_\xi)^{N}{\mathfrak{a}}_g(x,t,\xi) \Vert_{L^{p_2}(\mathbb{R}^d_x\times \mathbb{R}_t)}d\xi,
\end{align*}where we have denoted the support of the symbol $a$ in $\xi$  by $\mathfrak{S}.$ 
We can compute 
\begin{align*}
    (I-\Delta_{\xi})^{N} \mathfrak{a}_g(x,t,\xi)
    &= (I-\Delta_{\xi})^{N}(\psi(\xi)T_{a(\cdot,\cdot,\xi,\cdot)}^{\phi_2}g(x,t))\\
    &= T_{\varkappa(\xi)}^{\phi_2} g(x,t).
\end{align*}
Note that the symbol of $ T_{\varkappa(\xi)}^{\phi_2}$ is given by $$\varkappa(\xi):= (1-\Delta_{\xi})^{N}[\psi(\xi)a(\cdot,\cdot,\xi,\cdot)],$$ and is a linear combination of symbols of order $\leq m=m_1+m_2\leq m_2,$ where we have used that $m_1\leq 0.$ Thus,  $\varkappa$ is a symbol of order $m_2$ and we have the smoothing estimate
\begin{align*}
   \Vert T^{\phi_2}_{\varkappa(\xi)} g(x,t)\Vert_{L^{p_2}} \lesssim \Vert g\Vert_{L^{p_2}_{s_2}}.
 \end{align*} 
 By taking $N>d/2,$ we have that 
 \begin{align*}
    \Vert  T^{\phi_1}_{\mathfrak{a}_g}(f)(x,t) \Vert_{L^p(\mathbb{R}^d_x\times \mathbb{R}_t)}
    &\lesssim \smallint \Vert \widehat{\mathfrak{a}}_g(x,t,\lambda)\Vert_{L^{p_2}(\mathbb{R}^d_x\times \mathbb{R}_t)} \Vert f\Vert_{L^{p_1}_{s_1}(\mathbb{R}^d_x)}  d\lambda.
\end{align*} 
 Summarising, we have proved that 
 \begin{align*}
    \Vert T^{\phi_1,\phi_2}_{a_1}(f,g)\Vert_{L^p(K\times \mathcal{I})} 
     &\lesssim \smallint\langle\lambda\rangle^{-2N}  \smallint_\mathfrak{S}  \Vert (I-\Delta_\xi)^{N}{\mathfrak{a}}_g(x,t,\xi) \Vert_{L^{p_2}(\mathbb{R}^d_x\times \mathbb{R}_t)}d\xi\Vert f\Vert_{L^{p_1}_{s_1}(\mathbb{R}^d_x)}  d\lambda\\
      &= \smallint\langle\lambda\rangle^{-2N}  \smallint_\mathfrak{S}  \Vert T_{\varkappa(\xi)}^{\phi_2} g\Vert_{L^{p_2}(\mathbb{R}^d_x\times \mathbb{R}_t)}\Vert f\Vert_{L^{p_1}_{s_1}(\mathbb{R}^d_x)}  d\lambda\\
      &\lesssim  \smallint\langle\lambda\rangle^{-2N} d\lambda \sup_{\xi\in \mathfrak{S}}\Vert T_{\varkappa(\xi)}\Vert_{\mathscr{B}(L^{p_2}_{s_2},L^{p_2})} |\mathfrak{S}|  \Vert g\Vert_{L^{p_2}_{s_2}(\mathbb{R}^d_x)}\Vert f\Vert_{L^{p_1}_{s_1}(\mathbb{R}^d_x)}\\
      &\lesssim_{K\times \mathcal{I}\times \mathfrak{S}} \Vert f \Vert_{L^{p_1}_{s_1}(\mathbb{R}^d_x)} \Vert  g\Vert_{L^{p_2}_{s_2}(\mathbb{R}^d_x)}.
 \end{align*} Thus, the previous estimate prove \eqref{FIO:low:frequency}.

\subsection{An estimate due to Bourgain} In our further analysis for high-frequencies we will make a continuous decomposition of the operator $T^{\phi_1,\phi_2}_{a_1}$ and we will estimate its norm using certain maximal functions. Of crucial interest for us will be the next estimate due to Bourgain.
 \begin{theorem}[Bourgain \cite{Bourgain1986}]\label{Bourgain:l2:estimate}
    Let $K\in L^1(\mathbb{R}^d)$ be such that its Fourier transform $\widehat{K}$ is differentiable. Define the following quantities
    \begin{equation}
        \alpha_j=\sup_{|\xi|\sim 2^{j}}|\widehat{K}(\xi)|,\quad \beta_j=\sup_{|\xi|\sim 2^{j}}|\langle\nabla \widehat{K}(\xi),\xi \rangle|.\footnote{Here, $\nabla K=(\partial_{x_i}K)_{1\leq i\leq d},$ denotes the gradient of $K,$ and the bracket $\langle x,y\rangle=\sum_{i=1}^{d}x_{i}y_{i},$ denotes the inner product on $\mathbb{R}^d.$}
    \end{equation}Then, we have the following estimate on the maximal operator associated to $K,$
    \begin{equation}\label{Sharp:estimate}
        \Vert \sup_{t>0}|f\ast K_t|  \Vert_{L^2(\mathbb{R}^d)}\leq C\Gamma(K)\Vert f \Vert_{L^2(\mathbb{R}^d)},
    \end{equation}
    for every $f\in \mathscr{S}(\mathbb{R}^d),$ where 
    \begin{equation}\label{Gamma:K}
       \Gamma(K)=\sum_{j\in \mathbb{Z}}\alpha_j^{\frac{1}{2}}(\alpha_j+\beta_j)^{\frac{1}{2}}. 
    \end{equation}
\end{theorem}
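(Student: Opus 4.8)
\emph{Sketch of the proof.} The plan, following the classical square-function approach, is to decompose $K$ in frequency into Littlewood--Paley pieces localised to $\{|\xi|\sim 2^j\}$, bound the dilation-maximal operator of each piece by the geometric mean of two $L^2$ square functions, and sum. Fix a nonnegative radial $\phi\in C_0^\infty(\mathbb{R}^d)$ supported in $\{1/2\le|\xi|\le 2\}$ with $\sum_{j\in\mathbb{Z}}\phi(2^{-j}\xi)=1$ for $\xi\neq 0$, chosen so that the annuli $\{2^{j-1}\le|\xi|\le 2^{j+1}\}$ are comparable to the ones in the definitions of $\alpha_j,\beta_j$. Setting $\widehat{K^{(j)}}:=\phi(2^{-j}\cdot)\widehat K$, one has $K^{(j)}\in L^1(\mathbb{R}^d)$, $\widehat{K^{(j)}}\in C^1$ supported in $\{|\xi|\sim 2^j\}$, and, for $f\in\mathscr{S}(\mathbb{R}^d)$, $f\ast K_t=\sum_{j}f\ast K^{(j)}_t$ pointwise (since $\sum_j\widehat{K^{(j)}}=\widehat K$ and, using $|\widehat K|\le\Vert K\Vert_{L^1}$ and $\widehat f\in L^1$, the inversion integrals converge absolutely with bounded overlap in $j$). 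Hence $\sup_{t>0}|f\ast K_t(x)|\le\sum_{j}M_jf(x)$ with $M_jf(x):=\sup_{t>0}|f\ast K^{(j)}_t(x)|$, and by the triangle inequality in $L^2$ it suffices to prove the single-scale bound $\Vert M_jf\Vert_{L^2}\le C\,\alpha_j^{1/2}(\alpha_j+\beta_j)^{1/2}\Vert f\Vert_{L^2}$ with $C$ independent of $j$, then sum over $j\in\mathbb{Z}$.

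For the single-scale bound, write $g_t(x):=f\ast K^{(j)}_t(x)$. Since $\widehat{K^{(j)}}$ vanishes near the origin, $g_t(x)\to 0$ as $t\to 0^+$ (the inversion integral then runs over $|\xi|\gtrsim 2^{j-1}/t\to\infty$) and $t\mapsto g_t(x)$ is $C^1$, so the fundamental theorem of calculus and Cauchy--Schwarz in $t$ give
\[ \sup_{T>0}|g_T(x)|^2\le 2\int_0^\infty|g_t(x)|\,|\partial_t g_t(x)|\,dt\le 2\left(\int_0^\infty|g_t(x)|^2\frac{dt}{t}\right)^{\!1/2}\left(\int_0^\infty|\partial_t g_t(x)|^2\,t\,dt\right)^{\!1/2}. \]
Integrating in $x$, then Cauchy--Schwarz in $x$, Tonelli, Plancherel, and $\partial_t[\widehat{K^{(j)}}(t\xi)]=\langle\xi,(\nabla\widehat{K^{(j)}})(t\xi)\rangle=t^{-1}\langle t\xi,(\nabla\widehat{K^{(j)}})(t\xi)\rangle$, yield
\[ \Vert M_jf\Vert_{L^2}^2\le 2\Big(\int|\widehat f(\xi)|^2 A_j(\xi)\,d\xi\Big)^{\!1/2}\Big(\int|\widehat f(\xi)|^2 B_j(\xi)\,d\xi\Big)^{\!1/2}, \]
where $A_j(\xi)=\int_0^\infty|\widehat{K^{(j)}}(t\xi)|^2\frac{dt}{t}$ and $B_j(\xi)=\int_0^\infty|\langle t\xi,(\nabla\widehat{K^{(j)}})(t\xi)\rangle|^2\frac{dt}{t}$.

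For $\xi\neq 0$, the substitution $s=t|\xi|$ with $\omega=\xi/|\xi|$ turns $A_j(\xi)$ and $B_j(\xi)$ into integrals over $s\sim 2^j$ of $|\widehat{K^{(j)}}(s\omega)|^2$, respectively $|\langle s\omega,(\nabla\widehat{K^{(j)}})(s\omega)\rangle|^2$, against $ds/s$. Now $|\widehat{K^{(j)}}|\le\Vert\phi\Vert_\infty|\widehat K|$ and, by the product rule, $\langle\nabla\widehat{K^{(j)}}(\zeta),\zeta\rangle=\langle(\nabla\phi)(2^{-j}\zeta),2^{-j}\zeta\rangle\,\widehat K(\zeta)+\phi(2^{-j}\zeta)\,\langle\nabla\widehat K(\zeta),\zeta\rangle$, so on $\{|\zeta|\sim 2^j\}$ one has $|\widehat{K^{(j)}}(\zeta)|\lesssim\alpha_j$ and $|\langle\nabla\widehat{K^{(j)}}(\zeta),\zeta\rangle|\lesssim\alpha_j+\beta_j$; since the $s$-integral spans only $O(1)$ dyadic scales, $A_j(\xi)\lesssim\alpha_j^2$ and $B_j(\xi)\lesssim(\alpha_j+\beta_j)^2$ uniformly in $\xi$. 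Plugging this in gives $\Vert M_jf\Vert_{L^2}^2\lesssim\alpha_j(\alpha_j+\beta_j)\Vert f\Vert_{L^2}^2$, the desired single-scale bound; summing over $j$ produces $\Vert\sup_{t>0}|f\ast K_t|\Vert_{L^2}\le C\sum_{j}\alpha_j^{1/2}(\alpha_j+\beta_j)^{1/2}\Vert f\Vert_{L^2}=C\,\Gamma(K)\Vert f\Vert_{L^2}$, which is \eqref{Sharp:estimate}.

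I do not expect a serious obstacle. The two points needing care are: the legitimacy of interchanging the $j$-summation with Fourier inversion and of differentiating in $t$ under the integral (both justified by the compact frequency support of each $K^{(j)}$ together with $\widehat f\in\mathscr{S}(\mathbb{R}^d)$), and keeping the Littlewood--Paley annuli comparable to those in the definitions of $\alpha_j,\beta_j$ so that the sum over $j$ costs only an absolute constant. The essential content sits in the single-scale estimate, where a frequency-localised piece contributes the geometric mean of a size bound $\alpha_j$ (coming from the square function of $g_t$) and a size-plus-derivative bound $\alpha_j+\beta_j$ (coming from the square function of $\partial_t g_t$).
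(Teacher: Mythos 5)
This statement is quoted in the paper as a cited black-box result (Theorem 4.2, attributed to Bourgain \cite{Bourgain1986}); the paper gives no proof of it, so there is nothing internal to compare against. Your reconstruction is correct and is essentially Bourgain's own argument: a Littlewood--Paley decomposition $\widehat{K}=\sum_j\phi(2^{-j}\cdot)\widehat K$ reduces to a single dyadic scale, and for each scale the elementary inequality $\sup_T|g_T|^2\le 2\int_0^\infty|g_t|\,|\partial_tg_t|\,dt$ followed by Cauchy--Schwarz with weights $dt/t$ and $t\,dt$, Plancherel, and the radial substitution $s=t|\xi|$ gives the bound $\alpha_j^{1/2}(\alpha_j+\beta_j)^{1/2}$. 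Your side remarks (convergence of the inversion integrals, the $C^1$ regularity of $t\mapsto g_t$ from the compact frequency support of each $K^{(j)}$, the $O(1)$ overlap between the Littlewood--Paley annuli and the annuli defining $\alpha_j,\beta_j$, and the extra $\alpha_j$-contribution from $\nabla[\phi(2^{-j}\cdot)]$ in the product rule) are precisely the points that need attention and you handle them adequately.
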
 There are several reasons making Theorem \ref{Bourgain:l2:estimate} interesting by itself. First, the constant $\Gamma(K)$ in \eqref{Gamma:K} is sharp in some sense, see \cite[Page 1475]{Bourgain1986}. On the other hand, it has been useful in the analysis of the $L^2$-theory of maximal functions, for convex bodies and for the sphere $\mathbb{S}^{n-1}\subset \mathbb{R}^{n}$, see e.g. the introduction of \cite{Cardona:IMRN:2024}.

\begin{remark}[Stein \cite{SteinBook1993}, Pages 27-29]\label{square:b:lp}
    We also recall that if $\psi\in \mathscr{S}(\mathbb{R}^d)$ is such that its Fourier transform $\widehat{\psi}$ is supported in an annulus isolating the origin (and then $\widehat{\psi}(0)=\smallint \psi(\varkappa)d\varkappa=0$), one has the following $L^p$-boundedness property:
    \begin{equation}
        \Vert \left(\smallint\limits_{\mathbb{R}^d_x}|u\ast \psi_t(x)|^2dx\right)^\frac{1}{2}\Vert_{L^p(\mathbb{R}^d_x)}\lesssim \Vert u\Vert_{L^p(\mathbb{R}^d_x)},\,\,\psi_t:=t^{-d}\psi(t^{-1}\cdot).
    \end{equation}This fact will be used later.
\end{remark}

\subsection{Analysis of the high frequencies} The analysis of the high frequencies will consists in proving that
\begin{equation}  \Vert T^{\phi_1,\phi_2}_a(\mu(D)f,\mu(D)g) \Vert_{L^{p}(\mathbb{R}^d_x\times \mathbb{R}_t )}\lesssim \Vert f\Vert_{L^{p_1}_{s_1}(\mathbb{R}^d)} \Vert g\Vert_{L^{p_2}_{s_2}(\mathbb{R}^d)},
 \end{equation} where
 \begin{equation}
\frac{1}{p}=\frac{1}{p_1}+\frac{1}{p_2},\quad   s_i=m_i+(d-1)\left(\frac{1}{2}-\frac{1}{p_i}\right)-\sigma_i,\quad m=m_1+m_2,\quad m_i<0.
 \end{equation} This is the remainig main step in the proof of Theorem \ref{main:theorem:2}.
\begin{proof}[Proof of Theorem \ref{main:theorem:2}] Since the analysis of the low frequencies has been carried out in Subsection \ref{low:frequencies}, according to the notation above, we need to prove the estimate
\begin{align*}
   & \Vert  {I}_1 \Vert_{L^{p}(\mathbb{R}^d_x\times \mathbb{R}_t )}+\|I_2\Vert_{L^{p}(\mathbb{R}^d_x\times \mathbb{R}_t )}+\|I_3\Vert_{L^{p}(\mathbb{R}^d_x\times \mathbb{R}_t )}\lesssim\Vert f\Vert_{L^{p_1}_{s_1}(\mathbb{R}_x^d)} \Vert g\Vert_{L^{p_2}_{s_2}(\mathbb{R}_x^d)}.
\end{align*} We recall that the first term $I_1$ can be decomposed as follows (see Remark \ref{splitting:I1}).
    \begin{align*}
    I_1=\sum_{j=1}^3\smallint_\kappa^\infty\smallint_0^1\chi_{(0,1)}(rt)(Q_t^u T_{\nu_{m_1} }^{\phi_1}f)(x,s)(P_t^v S_{r,t}^j g)(x,s)m(t,x,s)\frac{dt dr}{tr}.
\end{align*}
The operators $P_t^v$ and $Q_t^u,$ are uniformly $L^p(\mathbb{R}^d_x)$-bounded with operators norms growing polynomially in $|u|$ and $|v|,$ respectively (see Remark \ref{Pt:Qt}).
In consequence, the following $L^{p_2}(\mathbb{R}_x^d)$-estimates are satisfied:
\begin{itemize}
    \item $\Vert P_t^v S_{r,t}^3g(x,s)\Vert_{L^{p_2}(\mathbb{R}_x^d)}\lesssim_v\Vert S_{r,t}^3g(x,s) \Vert_{L^{p_2}(\mathbb{R}_x^d)},$
     \item $\Vert P_t^v S_{r,t}^2g(x,s)\Vert_{L^{p_2}(\mathbb{R}_x^d)}\lesssim_v\Vert S_{r,t}^2g(x,s) \Vert_{L^{p_2}(\mathbb{R}_x^d)},$
     \item $\Vert  Q_t^u T_{\nu_{m_1} }^{\phi_1}f(x,s)\Vert_{L^{p_1}(\mathbb{R}_x^d)}\lesssim_u \Vert  T_{\nu_{m_1} }^{\phi_1}f(x,s) \Vert_{L^{p_1}(\mathbb{R}_x^d)}.$
\end{itemize} As a consequence, the following $L^{p_2}(\mathbb{R}_x^d\times \mathbb{R}_s)$-estimates hold:
\begin{itemize}
    \item $\Vert P_t^v S_{r,t}^3g(x,s)\Vert_{L^{p_2}(\mathbb{R}_x^d\times \mathbb{R}_s)}\lesssim_v\Vert S_{r,t}^3g(x,s) \Vert_{L^{p_2}(\mathbb{R}_x^d\times \mathbb{R}_s)},$
     \item $\Vert P_t^v S_{r,t}^2g(x,s)\Vert_{L^{p_2}(\mathbb{R}_x^d\times \mathbb{R}_s)}\lesssim_v\Vert S_{r,t}^2g(x,s) \Vert_{L^{p_2}(\mathbb{R}_x^d\times \mathbb{R}_s)},$
     \item $\Vert  Q_t^u T_{\nu_{m_1} }^{\phi_1}f(x,s)\Vert_{L^{p_1}(\mathbb{R}_x^d\times \mathbb{R}_s)}\lesssim_u \Vert  T_{\nu_{m_1} }^{\phi_1}f(x,s) \Vert_{L^{p_1}(\mathbb{R}_x^d\times \mathbb{R}_s)}.$
\end{itemize} Since the operators $S_{r,t}^1,S_{r,t}^2$ and $S_{r,t}^3,$ are Fourier integral operators with phase function $\phi_2(x,s,\eta)$ and of order $\leq m_2,$ and the order of the Fourier integral operator $T_{\nu_{m_1} }^{\phi_1}$ equals $m_1,$ in view of the hypothesis $(H1)$ of Theorem \ref{main:theorem:2}, we have that
\begin{itemize}
    \item $\Vert S_{r,t}^3g(x,s) \Vert_{L^{p_2}(\mathbb{R}_x^d\times \mathbb{R}_s)}\lesssim r^{m_2}t^\varepsilon \Vert g\Vert_{L^{p_2}_{s_2}(\mathbb{R}_x^d)},$
     \item $\Vert S_{r,t}^2g(x,s) \Vert_{L^{p_2}(\mathbb{R}_x^d\times \mathbb{R}_s)} \lesssim r^{m_2+\varepsilon}t^\varepsilon \Vert g\Vert_{L^{p_2}_{s_2}(\mathbb{R}_x^d)},$
     \item $ \Vert  T_{\nu_{m_1} }^{\phi_1}f(x,s) \Vert_{L^{p_1}(\mathbb{R}_x^d\times \mathbb{R}_s)} \lesssim   \Vert f\Vert_{L^{p_1}_{s_1}(\mathbb{R}_x^d)}.$
\end{itemize}
In consequence we have proved that following properties hold for $\varepsilon\in (0,1/2),$
\begin{itemize}
    \item $ \Vert P_t^v S_{r,t}^3g(x,s)\Vert_{L^{p_2}(\mathbb{R}_x^d\times \mathbb{R}_s)}\lesssim r^{m_2}t^\varepsilon \Vert g\Vert_{L^{p_2}_{s_2}(\mathbb{R}_x^d)},$
    \item $ \Vert P_t^v S_{r,t}^2g(x,s)\Vert_{L^{p_2}(\mathbb{R}_x^d\times \mathbb{R}_s)}\lesssim  r^{m_2+\varepsilon}t^\varepsilon \Vert g\Vert_{L^{p_2}_{s_2}(\mathbb{R}_x^d)},$ and,
    \item $ \Vert Q_t^u T_{\nu_{m_1} }^{\phi_1}f(x,s)\Vert_{L^{p_1}(\mathbb{R}_x^d\times \mathbb{R}_s)}\lesssim   \Vert f\Vert_{L^{p_1}_{s_1}(\mathbb{R}_x^d)}.$
\end{itemize} Below $\Vert m\Vert_{L^\infty}:=\Vert m\Vert_{L^\infty_U}$ denotes the $L^\infty$-norm of $m$ as a function of $(t,x,s)$ and then it is bounded polinomially as a function of $U=1+|u|+|v|.$ Observe that the following estimates are valid,
\begin{align*}
 & \left\Vert \smallint_\kappa^\infty\smallint_0^1\chi_{(0,1)}(rt)(Q_t^u T_{\nu_{m_1} }^{\phi_1}f)(x,s)(P_t^v S_{r,t}^3 g)(x,s)m(t,x,s)\frac{dt dr}{tr}\right\Vert_{L^{p}(  K\times \mathcal{I} )} \\ 
 &\leq  \smallint_\kappa^\infty\smallint_0^{\frac{1}{r}}\Vert(Q_t^u T_{\nu_{m_1} }^{\phi_1}f)(x,s)(P_t^v S_{r,t}^3 g)(x,s)m(t,x,s)\Vert_{L^{p}(  K\times \mathcal{I} )} \frac{dt dr}{tr}\\
  &\lesssim  \smallint_\kappa^\infty\smallint_0^{\frac{1}{r}}\Vert(Q_t^u T_{\nu_{m_1} }^{\phi_1}f)(x,s)\Vert_{L^{p_1}(  K\times \mathcal{I} )}\Vert (P_t^v S_{r,t}^3 g)(x,s)\Vert_{L^{p_2}(  K\times \mathcal{I} )}\Vert m\Vert_{L^\infty} \frac{dt dr}{tr}\\
  &\lesssim \smallint_\kappa^\infty\smallint_0^{\frac{1}{r}} r^{m_2}t^\varepsilon \frac{dt dr}{tr} \Vert f\Vert_{L^{p_1}_{s_1}(\mathbb{R}_x^d)} \Vert g\Vert_{L^{p_2}_{s_2}(\mathbb{R}_x^d)}.
\end{align*} We have used that $\smallint_\kappa^\infty\smallint_0^{\frac{1}{r}} r^{m_2}t^\varepsilon \frac{dt dr}{tr}<\infty,$ since $m_2<0.$
Also, we have the estimations below,
\begin{align*}
 & \left\Vert \smallint_\kappa^\infty\smallint_0^1\chi_{(0,1)}(rt)(Q_t^u T_{\nu_{m_1} }^{\phi_1}f)(x,s)(P_t^v S_{r,t}^2 g)(x,s)m(t,x,s)\frac{dt dr}{tr}\right\Vert_{L^{p}(  K\times \mathcal{I} )} \\ 
 &\leq  \smallint_\kappa^\infty\smallint_0^{\frac{1}{r}}\Vert(Q_t^u T_{\nu_{m_1} }^{\phi_1}f)(x,s)(P_t^v S_{r,t}^2 g)(x,s)m(t,x,s)\Vert_{L^{p}(  K\times \mathcal{I} )} \frac{dt dr}{tr}\\
  &\lesssim  \smallint_\kappa^\infty\smallint_0^{\frac{1}{r}}\Vert(Q_t^u T_{\nu_{m_1} }^{\phi_1}f)(x,s)\Vert_{L^{p_1}(  K\times \mathcal{I} )}\Vert (P_t^v S_{r,t}^2 g)(x,s)\Vert_{L^{p_2}(  K\times \mathcal{I} )}\Vert m\Vert_{L^\infty} \frac{dt dr}{tr}\\
  &\lesssim \smallint_\kappa^\infty\smallint_0^{\frac{1}{r}} r^{m_2+\varepsilon}t^\varepsilon \frac{dt dr}{tr} \Vert f\Vert_{L^{p_1}_{s_1}(\mathbb{R}_x^d)} \Vert g\Vert_{L^{p_2}_{s_2}(\mathbb{R}_x^d)}.
\end{align*}
Now, since $S_{r,t}^1=r^{m_2}P_{1,t}Q_{tr}T_\gamma^{\phi_2},$ in order to finish the estimate of $I_1$ we need to deal with the norm
$$ \left\Vert\smallint_\kappa^\infty r^{m_2}\smallint_0^1\chi_{(0,1)}(rt)(Q_t^u T_{\nu_{m_1} }^{\phi_1}f)(x,s)(P_t^v [P_{1,t}Q_{tr}T_\gamma^{\phi_2}]g)(x,s)m(t,x,s)\frac{dt dr}{tr} \right\Vert_{L^{p}( K\times \mathcal{I})}.$$
To estimate the previous norm let us take an arbitrary $h\in L^{p'}( K\times \mathcal{I})$ assumed to be supported in $K\times \mathcal{I},$ when considering its extension by zero to $\mathbb{R}^d\times \mathbb{R}.$
It suffices to prove that
\begin{align*}
   &|\mathscr{I}|= \left|\smallint\limits_{  K\times \mathcal{I} }\smallint\limits_0^1 h(x,s)  (Q_t^u T_{\nu_{m_1} }^{\phi_1}f)(x,s)(\widetilde{P}_t^v [R_tT_\gamma^{\phi_2}]g)(x,s)m(t,x,s)\frac{dt}{t}dxds\right|\\
   &\lesssim \Vert h\Vert_{L^{p'}(  K\times \mathcal{I} )} \Vert f\Vert_{L^{p_1}_{s_1}(\mathbb{R}_x^d)} \Vert g\Vert_{L^{p_2}_{s_2}(\mathbb{R}_x^d)},
\end{align*}
where 
\begin{align*}
    R_t(G)(x)=\smallint\limits_\kappa^{\frac{1}{t}}r^{m_2}Q_{tr}(G)(x)\frac{dr}{r}, \textnormal{  and  } \quad\widetilde{P}_t^v :=P_t^v P_{1,t}.
\end{align*}
Let us write 
\begin{align*}
    &\mathscr{I}=\smallint\limits_{  K\times \mathcal{I} }\smallint\limits_0^1 h(x,s)  (Q_t^u T_{\nu_{m_1} }^{\phi_1}f)(x,s)(\widetilde{P}_t^v [R_tT_\gamma^{\phi_2}]g)(x,s)m(t,x,s)\frac{dt}{t}dxds\\
    &=I+II,
\end{align*}where $I$ and $II$ will be defined later but the idea will be transfer a little bit of the information of the multipliers $Q_t^u$ and $\widetilde{P}_t^u$ to the product $hm$. First, we assume without loss of generality that the $x$-support of $h$ is contained in the $x$-support of $m(t,x,s).$ We decompose
$$ \widetilde{P}_t^v={P}_t^{v,2}+{Q}_t^{v,2},$$
where the symbol of ${P}_t^{v,2}$ is given by $\widehat{\theta}_2(t\xi)e^{it\xi\cdot v}.$ The support of $\widehat{\theta}_2$ is choosen small enough  to assure that, by choosing a suitable radial function $\widehat{\psi}_2$ supported in an annulus, the identity
$$(Q_t^u T_{\nu_{m_1} }^{\phi_1}f)(x,s)({P}_t^{v,2} [R_tT_\gamma^{\phi_2}]g)(x,s)=\widetilde{Q}_t (Q_t^u T_{\nu_{m_1} }^{\phi_1}f)(x,s)({P}_t^{v,2} [R_tT_\gamma^{\phi_2}]g)(x,s),$$ holds.
We have denoted by $\widetilde{Q}_t$ the multiplier with symbol $\widehat{\psi}(t\xi).$ In a similar way, we can choose a radial function $\widehat{\theta}_3,$ supported in a ball such that 
$$(Q_t^u T_{\nu_{m_1} }^{\phi_1}f)(x,s)({Q}_t^{v,2} [R_tT_\gamma^{\phi_2}]g)(x,s)=\widetilde{P}_t (Q_t^u T_{\nu_{m_1} }^{\phi_1}f)(x,s)({Q}_t^{v,2} [R_tT_\gamma^{\phi_2}]g)(x,s),$$ with $\widetilde{P}_t$ denoting the multiplier with symbol $\widehat{\theta}_3(t\xi).$ We have that

 \begin{align*}
   &|\mathscr{I}|= \left|\smallint\limits_{  K\times \mathcal{I} }\smallint\limits_0^1 h(x,s)  (Q_t^u T_{\nu_{m_1} }^{\phi_1}f)(x,s)(\widetilde{P}_t^v [R_tT_\gamma^{\phi_2}]g)(x,s)m(t,x,s)\frac{dt}{t}dxds\right|\\
   &\leq\left|\smallint\limits_{  K\times \mathcal{I} }\smallint\limits_0^1 h(x,s)  (Q_t^u T_{\nu_{m_1} }^{\phi_1}f)(x,s)({P}_t^{v,2} [R_tT_\gamma^{\phi_2}]g)(x,s)m(t,x,s)\frac{dt}{t}dxds\right|\\
   &+\left|\smallint\limits_{  K\times \mathcal{I} }\smallint\limits_0^1 h(x,s)  (Q_t^u T_{\nu_{m_1} }^{\phi_1}f)(x,s)({Q}_t^{v,2}[R_tT_\gamma^{\phi_2}]g)(x,s)m(t,x,s)\frac{dt}{t}dxds\right|\\
    &=\left|\smallint\limits_{  K\times \mathcal{I} }\smallint\limits_0^1 \widetilde{Q}_t(Q_t^u T_{\nu_{m_1} }^{\phi_1}f)(x,s)({P}_t^{v,2} [R_tT_\gamma^{\phi_2}]g)(x,s) h(x,s) m(t,x,s)\frac{dt}{t}dxds\right|\\
   &+\left|\smallint\limits_{  K\times \mathcal{I} }\smallint\limits_0^1  \widetilde{P}_t(Q_t^u T_{\nu_{m_1} }^{\phi_1}f)(x,s)({Q}_t^{v,2}[R_tT_\gamma^{\phi_2}]g)(x,s) h(x,s)m(t,x,s)\frac{dt}{t}dxds\right|\\
   &=\left|\smallint\limits_{  K\times \mathcal{I} }\smallint\limits_0^1 (Q_t^u T_{\nu_{m_1} }^{\phi_1}f)(x,s)({P}_t^{v,2} [R_tT_\gamma^{\phi_2}]g)(x,s) \widetilde{Q}_t(h(\cdot,s) m(t,\cdot,s))(x)\frac{dt}{t}dxds\right|\\
   &+\left|\smallint\limits_{  K\times \mathcal{I} }\smallint\limits_0^1  (Q_t^u T_{\nu_{m_1} }^{\phi_1}f)(x,s)({Q}_t^{v,2}[R_tT_\gamma^{\phi_2}]g)(x,s) \widetilde{P}_t( h(\cdot,s)m(t,\cdot,s))(x)\frac{dt}{t}dxds\right|.
\end{align*}Let us define 
\begin{align}
    I=\smallint\limits_{  K\times \mathcal{I} }\smallint\limits_0^1 (Q_t^u T_{\nu_{m_1} }^{\phi_1}f)(x,s)({P}_t^{v,2} [R_tT_\gamma^{\phi_2}]g)(x,s) \widetilde{Q}_t(h(\cdot,s) m(t,\cdot,s))(x)\frac{dt}{t}dxds,
\end{align} and 
\begin{align*}
    II=\smallint\limits_{  K\times \mathcal{I} }\smallint\limits_0^1 (Q_t^u T_{\nu_{m_1} }^{\phi_1}f)(x,s)({P}_t^{v,2} [R_tT_\gamma^{\phi_2}]g)(x,s) \widetilde{P}_t(h(\cdot,s) m(t,\cdot,s))(x)\frac{dt}{t}dxds.
\end{align*}Now, consider the multiplication operator $$M_m G\equiv M_m^{t,s}G:=m(t,x,s)G.$$ The idea will be to make the estimation of $I$ and of $II$ by carring out an analysis over the commutators $[\widetilde{P}_t,M_m]$ and $[\widetilde{Q}_t,M_m].$ 

Starting with $I$ we have that
\begin{align}
    I &=\smallint\limits_{  K\times \mathcal{I} }\smallint\limits_0^1 (Q_t^u T_{\nu_{m_1} }^{\phi_1}f)(x,s)({P}_t^{v,2} [R_tT_\gamma^{\phi_2}]g)(x,s) m(t,x,s) (\widetilde{Q}_t h(\cdot,s) )(x)\frac{dt}{t}dxds\\
    &+ \smallint\limits_{  K\times \mathcal{I} }\smallint\limits_0^1 (Q_t^u T_{\nu_{m_1} }^{\phi_1}f)(x,s)({P}_t^{v,2} [R_tT_\gamma^{\phi_2}]g)(x,s) ([\widetilde{Q}_t,M_m]h(\cdot,s))(x) \frac{dt}{t}dxds\\
    &=I_4+I_5,
\end{align}with 
$$ I_5= \smallint\limits_{  K\times \mathcal{I} }\smallint\limits_0^1 (Q_t^u T_{\nu_{m_1} }^{\phi_1}f)(x,s)({P}_t^{v,2} [R_tT_\gamma^{\phi_2}]g)(x,s) [\widetilde{Q}_t,M_m](h(\cdot,s) )(x)\frac{dt}{t}dxds.$$ Similarly,
\begin{align*}
    II&=\smallint\limits_{  K\times \mathcal{I} }\smallint\limits_0^1 (Q_t^u T_{\nu_{m_1} }^{\phi_1}f)(x,s)({Q}_t^{v,2} [R_tT_\gamma^{\phi_2}]g)(x,s)m(t,x,s) (\widetilde{P}_th(\cdot,s))(x)\frac{dt}{t}dxds\\
    &+ \smallint\limits_{  K\times \mathcal{I} }\smallint\limits_0^1 (Q_t^u T_{\nu_{m_1} }^{\phi_1}f)(x,s)({Q}_t^{v,2} [R_tT_\gamma^{\phi_2}]g)(x,s) [\widetilde{P}_t,M_m](h(\cdot,s))(x)\frac{dt}{t}dxds\\
    &=I_6+I_7,
\end{align*} 
with 
$$I_7= \smallint\limits_{  K\times \mathcal{I} }\smallint\limits_0^1 (Q_t^u T_{\nu_{m_1} }^{\phi_1}f)(x,s)({Q}_t^{v,2} [R_tT_\gamma^{\phi_2}]g)(x,s) [\widetilde{P}_t,M_m]h(\cdot,s) (x)\frac{dt}{t}dxds.$$
The terms involving the commutators, namely, $I_5$ and $I_7$ can be estimated due to the $L^{p'}$-boundedness of $ [\widetilde{P}_t,M_m]$ and of $ [\widetilde{Q}_t,M_m].$ Indeed, they satisfy the boundedness properties
\begin{align*}
   \Vert [\widetilde{P}_t,M_m] h(\cdot,s)\Vert_{L^{p'}(\mathbb{R}^d_x)}\lesssim  t\Vert  h(\cdot,s)\Vert_{L^{p'}(\mathbb{R}^2_x)},\,  \Vert [\widetilde{Q}_t,M_m] h(\cdot,s)\Vert_{L^{p'}(\mathbb{R}^d_x)}\lesssim  t\Vert h(\cdot,s)\Vert_{L^{p'}(\mathbb{R}^d_x)}.
\end{align*} In view of the hypothesis $(H1)$ in Theorem \ref{main:theorem:2}  we have that 
\begin{align*}
    \Vert {P}_t^{v,2} [R_tT_\gamma^{\phi_2}]g\Vert_{L^{p_2}(  K\times \mathcal{I} )}\lesssim C(v)\Vert T_\gamma^{\phi_2}g\Vert_{ L^{p_2}( \mathbb{R}^d\times \mathbb{R} ) } \lesssim C(v)\Vert g\Vert_{ L^{p_2}_{s_2}(\mathbb{R}^d_x) },
\end{align*} and also the estimate
\begin{align*}
    \Vert {Q}_t^{v,2} [R_tT_\gamma^{\phi_2}]g\Vert_{L^{p_2}(  K\times \mathcal{I} )}\lesssim C(v)\Vert T_\gamma^{\phi_2}g\Vert_{ L^{p_2}(  \mathbb{R}^d\times \mathbb{R}) } \lesssim C(v)\Vert g\Vert_{ L^{p_2}_{s_2}(\mathbb{R}^d_x) }.
\end{align*} Thus, the analysis above implies that
$$|I_5|+|I_7|\lesssim C(u,v,\varepsilon)\Vert f\Vert_{L^{p_1}_{s_1}(\mathbb{R}_x^d)}\Vert g\Vert_{L^{p_2}_{s_2}(\mathbb{R}_x^d)}\Vert h\Vert_{L^{p'}(\mathbb{R}^d_x\times\mathbb{R}_s)}.$$
In order to finish our analysis of $\mathscr{I}$ let us estimate the terms $I_4$ and $I_6.$ By using Cauchy-Schwarz inequality we have that 
\begin{align*}
    &|I_4|\\
    &=\left|\smallint\limits_{  K\times \mathcal{I} }\smallint\limits_0^1 (Q_t^u T_{\nu_{m_1} }^{\phi_1}f)(x,s)({P}_t^{v,2} [R_tT_\gamma^{\phi_2}]g)(x,s) m(t,x,s) (\widetilde{Q}_t h(\cdot,s) )(x)\frac{dt}{t}dxds\right|\\
    &= \left|\smallint\limits_{\mathcal{I}}\smallint\limits_{K}\smallint\limits_0^1 (Q_t^u T_{\nu_{m_1} }^{\phi_1}f)(x,s)({P}_t^{v,2} [R_tT_\gamma^{\phi_2}]g)(x,s) m(t,x,s) (\widetilde{Q}_t h(\cdot,s) )(x)\frac{dt}{t}dx ds\right| \\
    &\leq \left(\smallint\limits_{\mathcal{I}}\smallint\limits_{K}\smallint\limits_0^1 |(Q_t^u T_{\nu_{m_1} }^{\phi_1}f)(x,s)|^{2}\frac{dt}{t}dx ds\right)^\frac{1}{2} \\
    &\times \left(\smallint\limits_{\mathcal{I}}\smallint\limits_{K}\smallint\limits_0^1 |({P}_t^{v,2} [R_tT_\gamma^{\phi_2}]g)(x,s) m(t,x,s) (\widetilde{Q}_t h(\cdot,s) )(x)|^{2}\frac{dt}{t}dx ds\right)^\frac{1}{2}. 
\end{align*}
For the first term, Minkowski integral inequality and Remark \ref{square:b:lp} imply that
\begin{align*}
&\left(\smallint\limits_{\mathcal{I}}\smallint\limits_{K}\smallint\limits_0^1 |(Q_t^u T_{\nu_{m_1} }^{\phi_1}f)(x,s)|^{2}\frac{dt}{t}dx ds\right)^\frac{1}{2}\\
&\leq \smallint\limits_{\mathcal{I}}\smallint\limits_{K} \left(\smallint\limits_0^1 |(Q_t^u T_{\nu_{m_1} }^{\phi_1}f)(x,s)|^{2}\frac{dt}{t}\right)^\frac{1}{2} dx ds\\
& \leq \smallint\limits_{\mathcal{I}} \left(\smallint\limits_{K} \left(\smallint\limits_0^1 |(Q_t^u T_{\nu_{m_1} }^{\phi_1}f)(x,s)|^{2}\frac{dt}{t}\right)^\frac{p_1}{2} dx \right)^{\frac{1}{p_1}}\left(\smallint_{K}dx\right)^{\frac{1}{p_1'}}ds\\
&\lesssim  \smallint\limits_{\mathcal{I}} \left\Vert \left(\smallint\limits_0^1 |(Q_t^u T_{\nu_{m_1} }^{\phi_1}f)(x,s)|^{2}\frac{dt}{t}\right)^\frac{1}{2}  \right\Vert_{L^{p_1}(K)}ds\\
&\lesssim  \smallint\limits_{\mathcal{I}} \Vert T_{\nu_{m_1} }^{\phi_1}f  \Vert_{L^{p_1}}ds\\
&\lesssim  \left(\smallint\limits_{\mathcal{I}} \Vert T_{\nu_{m_1} }^{\phi_1}f  \Vert_{L^{p_1}}^{p_1}ds\right)^\frac{1}{p_1}\left(\smallint_{\mathcal{I}}ds\right)^{\frac{1}{p_1'}}ds.
\end{align*}
For the second factor, observe that 
\begin{align*}    &\left(\smallint\limits_{\mathcal{I}}\smallint\limits_{K}\smallint\limits_0^1 |({P}_t^{v,2} [R_tT_\gamma^{\phi_2}]g)(x,s) m(t,x,s) (\widetilde{Q}_t h(\cdot,s) )(x)|^{2}\frac{dt}{t}dx ds\right)^\frac{1}{2}\\
&\leq \Vert m\Vert_{L^\infty} \left(\smallint\limits_{\mathcal{I}}\smallint\limits_{K}\smallint\limits_0^1 |({P}_t^{v,2} [R_tT_\gamma^{\phi_2}]g)(x,s) (\widetilde{Q}_t h(\cdot,s) )(x)|^{2}\frac{dt}{t}dx ds\right)^\frac{1}{2}.
\end{align*}
Let 
$$A_t= ({P}_t^{v,2} [R_tT_\gamma^{\phi_2}]g)(x,s),\,B_t=(\widetilde{Q}_t h(\cdot,s) )(x).$$
We can estimate the integral involving these two terms using repeatedly H\"older's inequality after using Minkowski inequality. Indeed
\begin{align*}    &\left(\smallint\limits_{\mathcal{I}}\smallint\limits_{K}\smallint\limits_0^1 |A_t B_t|^{2}\frac{dt}{t}dx ds\right)^\frac{1}{2}\\
&\leq \smallint\limits_{\mathcal{I}}\smallint\limits_{K} \left(\smallint\limits_0^1 |A_t B_t|^{2}\frac{dt}{t}\right)^\frac{1}{2} dx ds\\
& \leq \smallint\limits_{\mathcal{I}}\smallint\limits_{K} \sup_{0<t<1} |A_t| \left(\smallint\limits_0^1 | B_t|^{2}\frac{dt}{t}\right)^\frac{1}{2} dx ds\\
&\leq \left( \smallint\limits_{\mathcal{I}}\smallint\limits_{K} \sup_{0<t<1} |A_t|^pdxds \right)^{\frac{1}{p}} \left( \smallint\limits_{\mathcal{I}}\smallint\limits_{K} \left(\smallint\limits_0^1 | B_t|^{2}\frac{dt}{t}\right)^\frac{p'}{2}dxds \right)^{\frac{1}{p'}}\\
&\leq \left(\left( \smallint\limits_{\mathcal{I}}\smallint\limits_{K} \sup_{0<t<1} |A_t|^\frac{pp_2}{p}dxds \right)^{\frac{p}{p_2}} \left(\smallint_{K\times \mathcal{I}}dxds\right)^{\frac{1}{(p_2/p)'}} \right)^{\frac{1}{p}} \left\Vert\left(\smallint\limits_0^1 | B_t|^{2}\frac{dt}{t}\right)^\frac{1}{2}\right\Vert_{L^{p'}}\\
&\lesssim  \Vert\sup_{0<t<1} |A_t|\Vert_{L^{p_2}}\left\Vert\left(\smallint\limits_0^1 | B_t|^{2}\frac{dt}{t}\right)^\frac{1}{2}\right\Vert_{L^{p'}}.
\end{align*}Note that the use of H\"older' inequality before is justified by the fact that $\frac{p_2}{p}\geq 1,$ since $\frac{1}{p}=\frac{1}{p_1}+\frac{1}{p_2}\geq \frac{1}{p_2}.$
In view of the hypothesis $(H1)$ in Theorem \ref{main:theorem:2} and of Remark \ref{square:b:lp},  we have that 
\begin{align*}
    \Vert\sup_{0<t<1} |A_t|\Vert_{L^{p_2}}\left\Vert\left(\smallint\limits_0^1 | B_t|^{2}\frac{dt}{t}\right)^\frac{1}{2}\right\Vert_{L^{p'}} &\lesssim C(v)\Vert T_\gamma^{\phi_2}g\Vert_{ L^{p_2}(  \mathbb{R}^d\times \mathbb{R} ) } \Vert h\Vert_{L^{p'}(\mathbb{R}^d\times \mathbb{R} )}\\
    &\lesssim C(v)\Vert g\Vert_{ L^{p_2}_{s_2}(\mathbb{R}^d_x) } \Vert h\Vert_{L^{p'}(\mathbb{R}^d_x\times\mathbb{R}_s)}.
\end{align*}
Observe that we have used that the estimate
\begin{align}\label{sup:Bourgain}
    \Vert\sup_{0<t<1} |A_t|\Vert_{L^{p_2}}\lesssim C(v)\Vert T_\gamma^{\phi_2}g\Vert_{ L^{p_2}(  \mathbb{R}^d\times \mathbb{R} ) },
\end{align} holds. Indeed, in order to prove this fact observe that 
$$ A_t= ({P}_t^{v,2} [R_tT_\gamma^{\phi_2}]g)(x,s)= R_t({P}_t^{v,2} [T_\gamma^{\phi_2}]g)(x,s).$$
Now, the action of the multiplier $R_t{P}_t^{v,2}$ on $G=G_s=T_\gamma^{\phi_2}g(\cdot,s),$ is given by 
\begin{align*}
    R_t({P}_t^{v,2} G)(x) &=\smallint_{\kappa}^{\frac{1}{t}}s^{m_2}Q_{ts}{P}_t^{v,2} G(x)\frac{ds}{s}\\
     &=\smallint_{\kappa}^{\frac{1}{t}}s^{m_2}\smallint e^{ix\cdot \xi}\widehat{\psi}(ts\xi)\widehat{\theta}_2(t\xi)e^{it\xi\cdot v} \widehat{G}(\xi)d\xi\frac{ds}{s}.
\end{align*} Due to the fact that the supports of $\widehat{\psi}$ and of  $\widehat{\theta}_2$ are contained in small annuli, so, on the support of these functions $|\xi|\asymp \frac{1}{ts}\asymp \frac{1}{t},$ from where we deduce that $s\asymp 1,$ and the integral in $s$ is supported on an interval around $s=1$ and on an annulus isolating $s=0.$ So, we have that 
\begin{align*}
     |R_t({P}_t^{v,2} G)(x)| &\lesssim \smallint\limits_{s\asymp 1}s^{m_2}\left|\smallint e^{ix\cdot \xi}\widehat{\psi}(ts\xi)\widehat{\theta}_2(t\xi)e^{it\xi\cdot v} \widehat{G}(\xi)d\xi\right|\frac{ds}{s}\\
      &\lesssim \smallint\limits_{s\asymp 1}\left|\smallint e^{ix\cdot \xi}\widehat{\psi}(ts\xi)\widehat{\theta}_2(t\xi)e^{it\xi\cdot v} \widehat{G}(\xi)d\xi\right| ds\\
      &= \smallint\limits_{s\asymp 1}\left|\widehat{\psi}(ts D)P_t^{v,2} G(x) \right| ds.
\end{align*} Let $\theta_2^v$ be the kernel with Fourier transform $\widehat{\theta}_2^v(\cdot),$ i.e., it is  the symbol of $P_t^{v,2},$ that is, $\widehat{\theta}_2^v(t\xi):=\widehat{\theta}_2(t\xi)e^{it\xi\cdot v}.$ Then
\begin{align*}
    \smallint\limits_{s\asymp 1}\left|\widehat{\psi}(ts D)P_t^{v,2} G(x) \right| ds
    &= \smallint\limits_{s\asymp 1}\left|G\ast (s^{-n}t^{-n}(\psi(s^{-1}\cdot)\ast (\theta_2^v)(t^{-1}\cdot)))(x) \right| ds\\
     &= \smallint\limits_{s\asymp 1}\left|\smallint G(x-y) s^{-n}t^{-n}(\psi(s^{-1}\cdot)\ast (\theta_2^v))(t^{-1}y))dy \right| ds\\
     &\asymp \smallint\limits_{s\asymp 1} \left|\smallint G(x-y) t^{-n}(\psi(s^{-1}\cdot)\ast (\theta_2^v))(t^{-1}y))dy \right| ds\\
     &\leq \Vert G\Vert_{L^\infty}  \smallint\limits_{s\asymp 1} t^{-n}\left|\smallint (\psi(s^{-1}\cdot)\ast (\theta_2^v))(t^{-1}y))dy \right| ds\\
     &\leq \Vert G\Vert_{L^\infty}  \smallint\limits_{s\asymp 1} \left|\smallint (\psi(s^{-1}\cdot)\ast (\theta_2^v)(y))dy \right| ds.
\end{align*}In consequence
\begin{align}\label{L:infty:Bourgain}
 \Vert  \sup_{0<t<1}  |R_t({P}_t^{v,2} G)\Vert_{L^\infty}\lesssim  \Vert G\Vert_{L^\infty}.
\end{align}
Now, let us prove the estimate
\begin{equation}\label{L2:Bourgain}
   \Vert \sup_{0<t<1}  |R_t({P}_t^{v,2} G)\Vert_{L^2}\lesssim  \Vert G\Vert_{L^2}.
\end{equation}    
 For this we will use Bourgain's Lemma (see Theorem \ref{Bourgain:l2:estimate}). We proceed with the proof of \eqref{L2:Bourgain} in the following remark.
 \begin{remark}
     Note that
\begin{align*}
    R_t({P}_t^{v,2} G)(x) &= \smallint\limits_{s\asymp 1}s^{m_2}\smallint e^{ix\cdot \xi}\widehat{\psi}(ts\xi)\widehat{\theta}_2(t\xi)e^{it\xi\cdot v} \widehat{G}(\xi)d\xi\frac{ds}{s}\\
     &= \smallint\limits_{s\asymp 1}s^{m_2}\widehat{\psi}(ts D)P_t^{v,2} G(x) \frac{ds}{s}\\
     &= \smallint\limits_{s\asymp 1}s^{m_2}G\ast (s^{-n}t^{-n}(\psi(s^{-1}\cdot)\ast (\theta_2^v)(t^{-1}\cdot))(x) \frac{ds}{s}\\
      &= \left(G\ast \smallint\limits_{s\asymp 1}s^{m_2}s^{-n}t^{-n}(\psi(s^{-1}\cdot)\ast (\theta_2^v)(t^{-1}\cdot)) \frac{ds}{s}\right) (x).\\
\end{align*}We have that the kernel $K_t$ of $ R_t{P}_t^{v,2}$
is given by
\begin{equation}
    K_t(x)=\smallint\limits_{s\asymp 1}s^{m_2} (s^{-n}t^{-n}(\psi(s^{-1}\cdot)\ast (\theta_2^v)(t^{-1}x)))\frac{ds}{s}.
\end{equation}
Let us write
\begin{equation*}
    K(x)=\smallint\limits_{s\asymp 1}s^{m_2} s^{-n}(\psi(s^{-1}\cdot)\ast (\theta_2^v))(x)\frac{ds}{s}.
\end{equation*}
Its Fourier transform is 
\begin{equation}
    \widehat{K}(\xi)= \smallint\limits_{s\asymp 1}s^{m_2-n}\mathscr{F}(\psi(s^{-1}\cdot)\ast  (\theta_2^v))(\xi))\frac{ds}{s},
\end{equation} and 
\begin{align*}
  \alpha_j:=  \sup_{|\xi|\sim 2^j}| \widehat{K}(\xi)| &\leq \smallint\limits_{s\asymp 1}\Vert \mathscr{F}(\psi(s^{-1}\cdot)\ast  (\theta_2^v))  \Vert_{L^\infty} s^{m_2-n}\frac{ds}{s}\\
     &\lesssim \smallint\limits_{s\asymp 1}  \Vert \psi(s^{-1}\cdot)\ast  (\theta_2^v)  \Vert_{L^1}\frac{ds}{s}.
\end{align*}
This shows that $ \sup_j\sup_{|\xi|\sim 2^j}| \widehat{K}(\xi)|$ is bounded. However, due to the properties of the support of $\psi$ and $\theta_2^v$ we can imporve this bound as follows
\begin{align*}
    \alpha_j\lesssim   \sup_{|\xi|\sim 2^j} \smallint\limits_{s\asymp 1}  | \widehat{\psi}(s \xi)\widehat{\theta}_2^\nu(\xi)|  \frac{ds}{s}\lesssim  \sup_{|\xi|\sim 2^j} \min\{|\xi|,|\xi|^{-1}\}=2^{-|j|}.
\end{align*}
On the other hand
\begin{align*}
    \nabla\widehat{K}(\xi)= (\smallint\limits_{s\asymp 1}s^{m_2-n}\mathscr{F}((\psi(s^{-1}\cdot)\ast  (\theta_2^v )) ix_j)(\xi))\frac{ds}{s})_{1\leq j\leq d}.
\end{align*}Therefore, we have the estimate
\begin{align*}
     \sup_{|\xi|\sim 2^j}| \langle \nabla\widehat{K}(\xi),\xi\rangle|&\leq \sum_{r=1}^d \smallint\limits_{s\asymp 1}s^{m_2-n}|\mathscr{F}((\psi(s^{-1}\cdot)\ast  (\theta_2^v)) i x_r)(\xi))||\xi_r|\frac{ds}{s}.
\end{align*}Observe that, using integration by parts, we have that
\begin{align*}
    \mathscr{F}((\psi(s^{-1}\cdot)\ast  (\theta_2^v )i x_r)(\xi))\xi_r &=\smallint e^{ix\xi} (\psi(s^{-1}\cdot)\ast  (\theta_2^v ))(x)i x_r\xi_rdx\\
    &=\smallint \partial_{x_r}( e^{ix\xi} )(\psi(s^{-1}\cdot)\ast  (\theta_2^v )(x) x_rdx\\
     &=-\smallint  e^{ix\xi} \partial_{x_r}[(\psi(s^{-1}\cdot)\ast  (\theta_2^v )(x)] x_rdx.
\end{align*}We have proved that
\begin{align*}
    \sup_{|\xi|\sim 2^j}| \langle \nabla\widehat{K}(\xi),\xi\rangle| &\leq \sum_{r=1}^d \smallint\limits_{s\asymp 1}s^{m_2-n}\Vert\mathscr{F}(\partial_{x_r}[(\psi(s^{-1}\cdot)\ast  (\theta_2^v )(x)] x_r)\Vert_{L^\infty}\frac{ds}{s}\\
    &\leq \sum_{r=1}^d \smallint\limits_{s\asymp 1}  \Vert \partial_{x_r}[(\psi(s^{-1}\cdot)\ast  (\theta_2^v )(x)] x_r \Vert_{L^1}\frac{ds}{s}<\infty.
\end{align*} We have included the previous computation to emphasise the cancellation of the quantity $|\xi_r|\lesssim 2^j$ after using integration by parts which we found interesting. Nevertheless, one can improve this bound. Indeed, due to the properties of the support of  $\psi$ and $\theta_2^v$ we have that $$Z_r:= (\psi(s^{-1}\cdot)\ast  (\theta_2^v)) i x_r\in \mathscr{S}(\mathbb{R}^d),$$ and that
\begin{align*}
    \beta_j 
    &\lesssim \sup_{|\xi|\sim 2^j} \sum_{r=1}^d \smallint\limits_{s\asymp 1}|\mathscr{F}(Z_r)(\xi)||\xi_r|ds\\
    &\asymp \sup_{|\xi|\sim 2^j}  \sum_{r=1}^d \smallint\limits_{s\asymp 1}|\mathscr{F}(\partial_{x_r}Z_r)(\xi)|ds \lesssim  \sup_{|\xi|\sim 2^j} \min\{|\xi|,|\xi|^{-1}\}=2^{-|j|}.
\end{align*} In consequence
\begin{equation}
       \Gamma(K):=\sum_{j\in \mathbb{Z}}\alpha_j^{\frac{1}{2}}(\alpha_j+\beta_j)^{\frac{1}{2}}<\infty. 
    \end{equation} In view of Theorem \ref{Bourgain:l2:estimate} we have proved that $$G\mapsto S^*G:=  \sup_{0<t<1}  |R_t({P}_t^{v,2} G)|:L^2\rightarrow L^2$$ is bounded. 
 \end{remark} 
 So, interpolating the $L^p$-estimates \eqref{L:infty:Bourgain} and \eqref{L2:Bourgain} for the sub-linear operator $S^*,$ defined via,
 $$S^*G:=  \sup_{0<t<1}  |R_t({P}_t^{v,2} G)|,$$ in view of the Marcinkiewicz interpolation theorem
 we have proved that for any $2\leq \kappa\leq \infty,$ one has that 
\begin{align}\label{L:varkappa}
 \Vert  \sup_{0<t<1}  |R_t({P}_t^{v,2} G)|\Vert_{L^\kappa}\lesssim  C_\kappa(v)\Vert G\Vert_{L^\kappa},
\end{align} with a polynomial constant $C_\varkappa(v)$ in $v.$ With $\kappa=p_2$ we have proved \eqref{sup:Bourgain} once taked the $L^{p_2}$-norm in the $s$-variable on both sides of \eqref{L:varkappa}.
In consequence
$$|I_4|\lesssim \Vert h\Vert_{L^{p'}(  K\times \mathcal{I} )} \Vert f\Vert_{L^{p_1}_{s_1}(\mathbb{R}_x^d)} \Vert g\Vert_{L^{p_2}_{s_2}(\mathbb{R}_x^d)}, $$ since $h$ is assumed to be supported in $K\times I.$
Since the same analysis done before for $I_4$ also works for $I_6,$ we have that
$$|I_4|+|I_6|\lesssim \Vert h\Vert_{L^{p'}(  K\times \mathcal{I} )} \Vert f\Vert_{L^{p_1}_{s_1}(\mathbb{R}_x^d)} \Vert g\Vert_{L^{p_2}_{s_2}(\mathbb{R}_x^d)}. $$ We have completed the proof of the estimate
\begin{align*}
   &|\mathscr{I}|\lesssim \Vert h\Vert_{L^{p'}(  K\times \mathcal{I} )} \Vert f\Vert_{L^{p_1}_{s_1}(\mathbb{R}_x^d)} \Vert g\Vert_{L^{p_2}_{s_2}(\mathbb{R}_x^d)}.
\end{align*}
It follows that 
\begin{align*}
    \Vert I_1\Vert_{L^p(  K\times \mathcal{I} )} \lesssim_{u,v}  \Vert f\Vert_{L^{p_1}_{s_1}(\mathbb{R}_x^d)} \Vert g\Vert_{L^{p_2}_{s_2}(\mathbb{R}_x^d)}.
\end{align*} 
To estimate $I_2,$ note that the hypothesis $(H1)$ in Theorem \ref{main:theorem:2} implies that 
\begin{align*}
    \Vert Q_t^u T^{\phi_1}_{\nu_{m_1}} f(x,s) \Vert_{L^{p_1}(  K\times \mathcal{I} )}\lesssim_{u}   \Vert f \Vert_{L^{p_1}_{s_1}(\mathbb{R}^d_x)}.
\end{align*}Then H\"older's inequality gives
\begin{align*}
    \Vert I_2\Vert_{L^{p}(  K\times \mathcal{I} )} 
    &\leq  \smallint\limits_0^1 \Vert Q_t^u T^{\phi_1}_{\nu_{m_1}} f(x,s) E_t^2g(x,s)m(t,x,s)\Vert_{L^{p}(  K\times \mathcal{I} )}\frac{dt}{t}\\
    &\leq  \smallint\limits_0^1 \Vert Q_t^u T^{\phi_1}_{\nu_{m_1}} f(x,s) \Vert_{L^{p_1}(  K\times \mathcal{I} )}\Vert E_t^2g(x,s)m(t,x,s)\Vert_{L^{p_2}(  K\times \mathcal{I} )}\frac{dt}{t}\\
    &\leq  \smallint\limits_0^1 \Vert Q_t^u T^{\phi_1}_{\nu_{m_1}} f(x,s) \Vert_{L^{p_1}(  K\times \mathcal{I} )}\Vert E_t^2g(x,s)\Vert_{L^{p_2}(  K\times \mathcal{I} )}\Vert m\Vert_{L^\infty}\frac{dt}{t}\\
    &\leq  \smallint\limits_0^1 \Vert E_t^2g(x,s)\Vert_{L^{p_2}(  K\times \mathcal{I} )}\Vert m\Vert_{L^\infty}\frac{dt}{t} \Vert f \Vert_{L^{p_1}_{s_1}(\mathbb{R}^d_x)}\\
     &\lesssim_{u}  \smallint\limits_0^1 t^{\varepsilon-1}\Vert m\Vert_{L^\infty} dt \Vert f \Vert_{L^{p_1}_{s_1}(\mathbb{R}^d_x)} \Vert  g\Vert_{L^{p_2}_{s_2}(\mathbb{R}^d_x)}.
\end{align*} The same analysis above applies for $I_3.$ Remember that $\Vert m\Vert_{L^\infty}:=\Vert m\Vert_{L^\infty_U}$ denotes the $L^\infty$-norm of $m$ as a function of $(t,x,s)$ and then it is bounded polinomially as a function of $U=1+|u|+|v|.$ In consequence, taking $N$ large enough,
we have from \eqref{main:rep} that 
\begin{align*}
   &\Vert T^{\phi_1,\phi_2}_{a_1}(\mu(D)f,\mu(D)g)(x,s)\Vert_{L^{p}(  K\times \mathcal{I} )} \\
   &\leq \smallint\smallint \frac{\Vert m\Vert_{L^\infty_U}}{(1+|u|^2+|v|^2)^N}du dv \Vert f \Vert_{L^{p_1}_{s_1}(\mathbb{R}^d_x)} \Vert  g\Vert_{L^{p_2}_{s_2}(\mathbb{R}^d_x)}
   \lesssim_{N} \Vert f \Vert_{L^{p_1}_{s_1}(\mathbb{R}^d_x)} \Vert  g\Vert_{L^{p_2}_{s_2}(\mathbb{R}^d_x)}.
\end{align*} The proof of Theorem \ref{main:theorem:2} is complete. 
\end{proof}

\section{Appendix: Bibliographic Discussion}\label{preliminaries}

The study of the continuity properties for FIOs has attracted considerable attention due to their applications to hyperbolic PDEs. Initial inquiries into global $L^p$ boundedness results can be traced to pioneering works from the late 1970s and early 1980s \cite{Asada1978, Peral1980, Miyachi1980}. Subsequent research has significantly expanded these results to a wide classes of symbols \cite{Ruzhansky2006, Ruzhansky:Sugimoto, Coriasco:Ruzhansky, Coriasco:Toft2016}. A notable line of inquiry involves establishing global criteria for FIOs within H\"ormander symbol classes $S^m_{\rho,\delta}$, specifically by relaxing the traditional $\delta < \rho$ condition in the setting of pseudo-differential operators to allow broader parameter ranges $0 \leq \delta < 1$ and $0 < \rho \leq 1$ \cite{Dos:Santos2014, KenigStaubach, Staubach2021, Staubach, Rodriguez:Lopez2015, Rodriguez:Lopez}. This development was partly inspired by analogous results in the theory of pseudo-differential operators, which demonstrated boundedness without the $\delta < \rho$ restriction \cite{Del2006,Hounie}. For general pseudo-differential operators on graded Lie groups we refer to the work of the author with M. Ruzhansky and J. Delgado \cite{CardonaDelgadoRuzhansky:Lp:graded}. The question of global parametrisation for Lagrangian distributions has also been explored through alternative geometric and analytical approaches \cite{Laptev:Safarov:Vassiliev, Ruzhansky:CWI-book}. Additionally, global definitions of FIOs have been formulated in specialised settings, such as on compact Lie groups using the representation theory of these groups \cite{CardonaRuzhansky:MSJ:Japan}.

The systematic theory of FIOs with complex-phase functions originated from the work of Melin and Sj\"ostrand \cite{Melin:Sjostrand1975}, motivated by constructing parametrices for partial differential operators with complex principal symbols \cite{Melin:Sjostrand1976}. This framework has found significant application in several complex variables, where the singular structure of the Bergman kernel can be described via asymptotic expansions involving complex-phase FIOs \cite{Boutet:de:Monvel82}. The comprehensive analysis of the Bergman kernel itself represents another major area of study \cite{Fefferman1974, Fefferman1976}.

A distinct and highly active research direction concerns smoothing estimates for FIOs, particularly following the resolution of the decoupling conjecture by Bourgain and Demeter \cite{Bourgain:Demeter:2015}. The conjecture regarding the inherent smoothing properties of FIOs was formally posed by Sogge \cite{Sogge1991}. While a complete survey of this extensive field is beyond our scope, the literature includes influential early results \cite{Bourgain1991} and numerous recent advances that explore various aspects of this smoothing phenomenon, employing diverse methods from harmonic analysis, see  \cite{Minicozzi:Sogge, GuthWangZhang, BeltranHickmanSogge, GaoLiuMiaoXi2023} and the references therein.\\

\noindent\textbf{Conflict of interests statement - Data Availability Statements.}  The author states that there is no conflict of interest.  Data sharing does not apply to this article as no datasets were generated or
analysed during the current study.
\\

\noindent\textbf{Acknowledgement.}{The author would like to thank the anonymous referee for their valuable suggestions, which helped improve this manuscript.}

\bibliographystyle{amsplain}

\end{document}